\newtheorem{Lem}{Lemma}[section]
\newtheorem{Thm}[Lem]{Theorem}
\theoremstyle{definition}
\newtheorem{Eg}[Lem]{Example}
\newtheorem{Def}[Lem]{Definition}
\newtheorem{Rmk}[Lem]{Remark}
\setlist[enumerate]{leftmargin = *}
\setlist[itemize]{leftmargin = *}
\numberwithin{equation}{section}
\newcommand{\A}{\mathscr{A}}
\newcommand{\B}[1]{\func{\mathcal{B}}{#1}}
\newcommand{\C}{\mathbb{C}}
\newcommand{\D}{\displaystyle}
\newcommand{\e}{\operatorname{e}}
\newcommand{\E}[3]{\func{E}{#1,#2;#3}}
\newcommand{\G}{\mathcal{G}}
\newcommand{\I}[1]{\func{\mathcal{I}}{#1}}
\newcommand{\K}[1]{\func{\mathbin{\mathbb{K}}}{#1}}
\newcommand{\M}[1]{\func{M}{#1}}
\newcommand{\X}[3]{\func{\mathsf{X}}{#1,#2;#3}}
\newcommand{\Y}{\mathsf{Y}}
\newcommand{\Z}[1]{\func{\mathcal{Z}}{#1}}
\newcommand{\Br}[1]{\( #1 \)}
\newcommand{\Cb}[1]{\func{C_{b}}{#1}}
\newcommand{\Cc}[1]{\func{C_{c}}{#1}}
\newcommand{\CC}[2]{\SqBr{#1,#2}}
\newcommand{\Cl}[2]{\overline{#1}^{#2}}
\newcommand{\Co}[1]{\func{C_{0}}{#1}}
\newcommand{\df}{\stackrel{\operatorname{df}}{=}}
\newcommand{\Do}[3]{\func{D_{0}}{#1,#2;#3}}
\newcommand{\Eo}[3]{\func{E_{0}}{#1,#2;#3}}
\newcommand{\GG}{\mathscr{G}}
\newcommand{\id}{\operatorname{id}}
\newcommand{\lt}{\operatorname{lt}}
\newcommand{\Xo}[3]{\func{\mathsf{X}_{0}}{#1,#2;#3}}
\newcommand{\Aut}[1]{\func{\operatorname{Aut}}{#1}}
\newcommand{\FCP}[2]{\func{C^{\ast}}{#1,#2}}
\newcommand{\Fix}[3]{\func{\operatorname{Fix}}{#1,#2;#3}}
\newcommand{\Int}[4]{\int_{#1} #2 ~ \mathrm{d}{\func{#3}{#4}}}
\newcommand{\Map}[4]{\mleft\{ \begin{matrix} #1 & \to & #2 \\ #3 & \mapsto & \D #4 \end{matrix} \mright\}}
\newcommand{\RCP}[2]{\func{C^{\ast}_{\r}}{#1,#2}}
\newcommand{\Seq}[2]{\Br{#1}_{#2}}
\newcommand{\Set}[2]{\mleft\{ #1 ~ \middle| ~ #2 \mright\}}
\newcommand{\FixR}[4]{\func{\operatorname{Fix}_{\operatorname{Rieffel}}}{#1,#2,#3;#4}}
\newcommand{\func}[2]{#1 \Br{#2}}
\newcommand{\Func}[2]{\func{\Br{#1}}{#2}}
\newcommand{\FUNC}[2]{\func{\SqBr{#1}}{#2}}
\newcommand{\Norm}[1]{\mleft\| #1 \mright\|}
\newcommand{\Pair}[2]{\Br{#1,#2}}
\newcommand{\PCDS}[4]{\Br{#1,#2,#3;#4}}
\newcommand{\PGDS}[3]{\Br{#1,#2;#3}}
\newcommand{\Prim}[1]{\func{\operatorname{Prim}}{#1}}
\newcommand{\Quad}[4]{\Br{#1,#2,#3,#4}}
\newcommand{\Quot}[2]{#1 / #2}
\newcommand{\Span}[1]{\func{\operatorname{Span}}{#1}}
\newcommand{\SqBr}[1]{\[ #1 \]}
\newcommand{\SSet}[1]{\mleft\{ #1 \mright\}}
\newcommand{\Trip}[3]{\Br{#1,#2,#3}}
\newcommand{\Unit}[1]{#1^{\Br{0}}}
\newcommand{\Gammb}[1]{\func{\Gamma_{b}}{#1}}
\newcommand{\Gammc}[1]{\func{\Gamma_{c}}{#1}}
\newcommand{\Gammo}[1]{\func{\Gamma_{0}}{#1}}
\newcommand{\Inner}[3]{\mleft\langle #1 \middle| #2 \mright\rangle_{#3}}
\newcommand{\InvIm}[2]{#1^{- 1} \SqBr{#2}}
\newcommand{\Orbit}[2]{#1 \backslash #2}
\newcommand{\alphArg}[2]{\func{\alpha_{#1}}{#2}}
\newcommand{\alphExtArg}[2]{\func{\overline{\alpha}_{#1}}{#2}}
\renewcommand{\(}{\mleft(}
\renewcommand{\)}{\mright)}
\renewcommand{\[}{\mleft[}
\renewcommand{\]}{\mright]}
\renewcommand{\r}{\operatorname{r}}
\renewcommand{\u}{\operatorname{u}}
\renewcommand{\Im}[2]{#1 \SqBr{#2}}
\renewcommand{\ker}[1]{\func{\operatorname{ker}}{#1}}
\begin{document}

\title[Bundles of Generalized Fixed-Point Algebras]{Bundles of Generalized Fixed-Point Algebras for Proper Groupoid Dynamical Systems}
\author{Jonathan H. Brown and Leonard T. Huang}
\address{Jonathan Brown, Department of Mathematics, University of Dayton, Dayton, Ohio 45469, U.S.A.}
\email{jbrown10@udayton.edu}
\address{Leonard Huang, Department of Mathematics, University of Colorado Boulder, Boulder, Colorado 80309, U.S.A.}
\email{Leonard.Huang@Colorado.EDU}

\begin{abstract}
In this paper, we show that the generalized fixed-point algebra of a proper groupoid dynamical system, under certain assumptions, may be fibered over any locally compact Hausdorff space to which a continuous map exists from the unit space of the underlying groupoid. We will also provide some important examples.
\end{abstract}

\subjclass[2010]{Primary 20L05, 46L55, Secondary 22D25, 46L08.}

\keywords{$ \Co{X} $-algebra, $ C^{\ast} $-bundle, groupoid dynamical system, generalized fixed-point algebra.}

\maketitle


\section*{Introduction}


In 1990, Marc Rieffel defined proper $ C^{\ast} $-dynamical systems to extend the idea of proper topological dynamical systems to noncommutative spaces (\cite{Rieffel2}). In 2009, the first author extended Rieffel's work in his PhD thesis to incorporate groupoid actions by defining proper groupoid dynamical systems. The results of this thesis later appeared in \cite{Brown}.

The main result of \cite{Rieffel2} is that a proper $ C^{\ast} $-dynamical system gives rise to a $ C^{\ast} $-algebra, called the \emph{generalized fixed-point algebra}, that is Morita equivalent to an ideal of the reduced crossed product of the system. It was shown in \cite{Brown} that generalized fixed-point algebras can be defined analogously for proper groupoid dynamical systems.

Rieffel also showed in \cite{Rieffel2} that if a proper $ C^{\ast} $-dynamical system associated to a constant group bundle over a locally compact Hausdorff space can be fibered, in a certain sense, into proper $ C^{\ast} $-dynamical systems, then the generalized fixed-point algebra of the original proper $ C^{\ast} $-dynamical system can likewise be fibered over the space into generalized fixed-point algebras. It is our goal in this paper to offer a far-reaching generalization of this result.


\section{Preliminaries}


This paper concerns $ \Co{X} $-algebras, for a locally compact Hausdorff space $ X $. A \emph{$ \Co{X} $-algebra} is a pair $ \Pair{A}{\Phi} $ such that
\begin{itemize}
\item
$ A $ is a $ C^{\ast} $-algebra and

\item
$ \Phi: \Co{X} \to \Z{\M{A}} $ is a non-degenerate $ \ast $-homomorphism.
\end{itemize}
Note that $ \Phi $ is non-degenerate if and only if for any approximate identity $ \Seq{e_{i}}{i \in I} $ in $ A $, we have $ \D \lim_{i \in I} \FUNC{\func{\Phi}{e_{i}}}{a} = a $ for all $ a \in A $. As we will consider $ C^{\ast} $-algebras $ A $ with multiple $ \Co{X} $-algebra structures, we will not, as is usual, drop $ \Phi $ from the notation. We will denote by $ \func{J}{A,\Phi;x} $ the ideal
$$
\Cl{\Span{\Set{\FUNC{\func{\Phi}{\varphi}}{a}}{\varphi \in \Co{X}, ~ \func{\varphi}{x} = 0, ~ \text{and} ~ a \in A}}}{A}
$$
of $ A $, and $ \Pair{A}{\Phi}_{x} $ the quotient $ C^{\ast} $-algebra $ \Quot{A}{\func{J}{A,\Phi;x}} $.

For every $ \Co{X} $-algebra $ \Pair{A}{\Phi} $, there is a unique upper-semicontinuous $ C^{\ast} $-bundle $ \Pair{\A^{A,\Phi}}{p^{A,\Phi}} $, where
\begin{itemize}
\item
$ \A^{A,\Phi} $ is a topological space, with underlying set $ \D \bigsqcup_{x \in X} \Pair{A}{\Phi}_{x} $, satisfying properties that are listed in \cite[Definition C.16]{Williams}.

\item
$ p^{A,\Phi}: \A^{A,\Phi} \to X $ is a continuous open surjection, and $ \func{p^{A,\Phi}}{x,a} = x $ for all $ x \in X $ and $ a \in \Pair{A}{\Phi}_{x} $.
\end{itemize}
For more details about $ \Co{X} $-algebras and upper-semicontinuous $ C^{\ast} $-bundles, see \cite[Appendix C]{Williams}.

The next result says that ideals of $ \Co{X} $-algebras are also $ \Co{X} $-algebras.



\begin{Lem} \label{An Ideal of a C0(X)-Algebra Is a C0(X)-Algebra}
Let $ \Pair{A}{\Phi} $ be a $ \Co{X} $-algebra, and $ E $ an ideal of $ A $. Then $ \Pair{E}{\Phi_{E}} $ is also a $ \Co{X} $-algebra, where $ \Phi_{E}: \Co{X} \to \Z{\M{E}} $ is defined by
$$
\forall \varphi \in \Co{X}: \quad
\func{\Phi_{E}}{\varphi} \df \func{\Phi}{\varphi}|_{E}.
$$
\end{Lem}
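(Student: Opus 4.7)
The plan is to unpack the three requirements of being a $C_0(X)$-algebra and verify each in turn for $(E, \Phi_E)$: that $\Phi_E(\varphi)$ makes sense as a multiplier of $E$, that the resulting multiplier is central, and that $\Phi_E$ is non-degenerate. Each step should reduce to a known property of $(A,\Phi)$, exploited through the fact that $E$ is a closed two-sided ideal of $A$.

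For the first point, I would start by showing that $\Phi(\varphi)$ sends $E$ into itself on either side: since $\Phi(\varphi) \in M(A)$ and $E$ is an ideal of $A$, writing $e \in E$ as $\lim_{i} a_{i} e$ for an approximate unit $(a_{i})$ of $A$ forces $\Phi(\varphi) e = \lim_{i} (\Phi(\varphi) a_{i}) e \in E$, and similarly on the right. The maps $L(e) \mathrel{:=} \Phi(\varphi) e$ and $R(e) \mathrel{:=} e \Phi(\varphi)$ then form a double centralizer of $E$; the axiom $L(e_{1}) e_{2} = e_{1} R(e_{2})$ follows from the centrality of $\Phi(\varphi)$ inside $M(A)$. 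That $\Phi_{E}$ is a $*$-homomorphism then drops out of the restriction formula. For the centrality of $\Phi_{E}(\varphi)$ in $M(E)$, the key input is strict density: given $m \in M(E)$, pick a net $(f_{\alpha}) \subseteq E$ with $f_{\alpha} e \to m e$ and $e f_{\alpha} \to e m$ in norm for each $e \in E$. Then
$$
(\Phi_{E}(\varphi) m) e = \Phi(\varphi)(m e) = \lim_{\alpha} \Phi(\varphi)(f_{\alpha} e) = \lim_{\alpha} f_{\alpha} \Phi(\varphi) e = m \Phi(\varphi) e = (m \Phi_{E}(\varphi)) e,
$$
with an analogous calculation on the right, giving $\Phi_{E}(\varphi) m = m \Phi_{E}(\varphi)$.

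For non-degeneracy, I would verify $\overline{\Phi_{E}(C_{0}(X)) \cdot E} = E$. Given $e \in E$, approximate $e$ by $f_{j} e$ for an approximate unit $(f_{j})$ of $E$, and then approximate each $f_{j}$ by finite sums $\sum_{k} \Phi(\varphi_{k}) a_{k}$ using the non-degeneracy of $\Phi$ on $A$. Right-multiplying by $e$ pulls the $a_{k}$ into $E$ (again because $E$ is an ideal), producing the required approximation of $e$ by elements of $\Phi_{E}(C_{0}(X)) \cdot E$.

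The only delicate step is the centrality argument, where one has to push commutativity from $E$ out to all of $M(E)$ via strict approximation; every other piece is a direct transcription of the corresponding property of $(A, \Phi)$ using that $E$ is an ideal.
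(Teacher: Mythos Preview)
Your argument is correct and follows the same outline as the paper's: check that $\Phi_E(\varphi)$ lands in $\mathcal{Z}(M(E))$ and then verify non-degeneracy. The paper is simply far terser on the first two points, asserting $\Phi_E(\varphi)\in\mathcal{Z}(M(E))$ as an immediate consequence of $E$ being an ideal, without writing out the double-centralizer or strict-density computations you spell out.

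The one place the paper is genuinely more efficient is non-degeneracy: rather than proving $\overline{\Phi_E(C_0(X))\cdot E}=E$ via a factorization through an approximate unit of $E$, it uses the equivalent approximate-identity criterion directly. Take any approximate identity $(\varphi_i)$ in $C_0(X)$; since $\Phi$ is non-degenerate on $A$, one has $\Phi(\varphi_i)(a)\to a$ for every $a\in A$, hence in particular for every $a\in E$, and $\Phi_E(\varphi_i)(a)=\Phi(\varphi_i)(a)$ by definition. This bypasses your two-step approximation entirely. Your route works, but the paper's observation that the approximate-identity convergence on $A$ restricts verbatim to $E$ is the cleaner way to close the argument.
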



\begin{proof}
As $ E $ is an ideal of $ A $, we have $ \func{\Phi_{E}}{\varphi} \in \Z{\M{E}} $ for all $ \varphi \in \Co{X} $. It remains to see that $ \Phi_{E} $ is non-degenerate. Let $ \Seq{\varphi_{i}}{i \in I} $ be an approximate identity of $ \Co{X} $. As $ \Phi $ is non-degenerate, we have
$$
\forall a \in E: \quad
  \lim_{i \in I} \FUNC{\func{\Phi_{E}}{\varphi_{i}}}{a}
= \lim_{i \in I} \FUNC{\func{\Phi}{\varphi_{i}}}{a}
= b.
$$
This completes the proof.
\end{proof}



\begin{Rmk}
\cref{An Ideal of a C0(X)-Algebra Is a C0(X)-Algebra} is a generalization of \cite[Proposition 3.3]{Rieffel2}.
\end{Rmk}


We will also need the following lemma, which describes how to transfer $ \Co{X} $-algebra structures across imprimitivity bimodules. See \cite{Raeburn/Williams} for more details about imprimitivity bimodules.



\begin{Lem} \label{Transferring C0(X)-Algebra Structures Across Imprimitivity Bimodules}
Let $ A,B $ be $ C^{\ast} $-algebras, and $ \Y $ an imprimitivity $ \Pair{A}{B} $-bimodule. Suppose that $ \Pair{B}{\Psi} $ is a $ \Co{X} $-algebra. Then $ \Pair{A}{\Phi} $ is a $\Co{X}$-algebra, where $\Phi$ is characterized by
$$
\forall \varphi \in \Co{X}, ~ \forall \zeta,\eta \in \Y, ~ \forall b \in B: \quad
\FUNC{\func{\Phi}{\varphi}}{\Inner{\zeta}{\eta \bullet b}{\Y,A}} = \Inner{\zeta}{\eta \bullet \FUNC{\func{\Psi}{\varphi}}{b}}{\Y,A}.
$$
Furthermore, $ \Pair{A}{\Phi} $ satisfies
$$
\forall x \in X: \quad
\Pair{A}{\Phi}_{x} = \Quot{A}{\func{h_{\Y}}{\func{J}{B,\Psi;x}}},
$$
where $ h_{\Y} $ denotes the Rieffel correspondence map for $ \Y $ (see \cite[Proposition 3.24]{Raeburn/Williams}).
\end{Lem}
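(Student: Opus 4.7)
The plan is to construct $\Phi$ via the canonical $\ast$-isomorphism between the centers of the multiplier algebras induced by the imprimitivity bimodule $\Y$. Specifically, $\Y$ extends naturally to an $\Br{\M{A},\M{B}}$-imprimitivity bimodule, and for each $c \in \Z{\M{B}}$ there exists a unique $\func{\sigma}{c} \in \Z{\M{A}}$ characterized by $\func{\sigma}{c} \bullet \zeta = \zeta \bullet c$ for all $\zeta \in \Y$; the resulting map $\sigma : \Z{\M{B}} \to \Z{\M{A}}$ is a $\ast$-isomorphism (a standard consequence of the fact that $\Y$ implements a Morita equivalence; see \cite{Raeburn/Williams}). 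I then define $\Phi \df \sigma \circ \Psi$, which is automatically a $\ast$-homomorphism from $\Co{X}$ into $\Z{\M{A}}$.

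To verify the characterizing formula, I combine the compatibility of the left $\M{A}$-action with $\Inner{\cdot}{\cdot}{\Y,A}$, the defining property of $\sigma$ used to rewrite $\func{\Phi}{\varphi} \bullet \eta$ as $\eta \bullet \func{\Psi}{\varphi}$, and the centrality of $\func{\Psi}{\varphi}$ in $\M{B}$ to commute it past $b$, obtaining
\[ \FUNC{\func{\Phi}{\varphi}}{\Inner{\zeta}{\eta \bullet b}{\Y,A}} = \Inner{\zeta}{\eta \bullet \FUNC{\func{\Psi}{\varphi}}{b}}{\Y,A}. \]
Fullness of $\Y$ guarantees that inner products of this form are dense in $A$, so the identity uniquely characterizes $\Phi$. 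Non-degeneracy of $\Phi$ then transfers from that of $\Psi$: for any approximate identity $\Seq{\varphi_{i}}{i \in I}$ in $\Co{X}$, the characterization and continuity of the inner product give $\FUNC{\func{\Phi}{\varphi_{i}}}{\Inner{\zeta}{\eta \bullet b}{\Y,A}} \to \Inner{\zeta}{\eta \bullet b}{\Y,A}$, and uniform boundedness of $\SSet{\func{\Phi}{\varphi_{i}}}$ propagates this to all of $A$.

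For the fiber identity, observe that $\func{J}{A,\Phi;x}$ is the closed linear span of elements $\FUNC{\func{\Phi}{\varphi}}{a}$ with $\func{\varphi}{x} = 0$; density of inner products in $A$ allows me to restrict to $a = \Inner{\zeta}{\eta \bullet b}{\Y,A}$, after which the characterization rewrites such elements as $\Inner{\zeta}{\eta \bullet \FUNC{\func{\Psi}{\varphi}}{b}}{\Y,A}$. As $\varphi$ and $b$ vary, $\FUNC{\func{\Psi}{\varphi}}{b}$ runs over a dense subset of $\func{J}{B,\Psi;x}$, so $\func{J}{A,\Phi;x}$ coincides with the closed linear span of $\Inner{\zeta}{\eta \bullet j}{\Y,A}$ for $j \in \func{J}{B,\Psi;x}$, which is by definition $\func{h_{\Y}}{\func{J}{B,\Psi;x}}$. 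The fiber formula follows on taking quotients. The main obstacle I anticipate is purely bookkeeping: checking that the commutation in the characterizing formula goes through cleanly in the paper's sesquilinearity convention for $\Inner{\cdot}{\cdot}{\Y,A}$, without an extraneous complex conjugate on $\varphi$ surfacing.
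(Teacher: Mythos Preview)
Your argument is correct but proceeds along a genuinely different route from the paper's. The paper constructs $\Phi$ via the Dauns--Hofmann theorem: it takes the continuous map $\sigma:\Prim{B}\to X$ associated to $\Psi$ (via \cite[Proposition~C.5]{Williams}), transports it through the Rieffel-correspondence homeomorphism $h_{\Y}|_{\Prim{B}}:\Prim{B}\to\Prim{A}$ to obtain $\tau:\Prim{A}\to X$, and then sets $\func{\Phi}{\varphi}=\func{\Omega_{A}}{\varphi\circ\tau}$, where $\Omega_{A}:\Cb{\Prim{A}}\to\Z{\M{A}}$ is the Dauns--Hofmann isomorphism. The characterizing formula is then verified one primitive ideal at a time. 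Your approach bypasses Dauns--Hofmann entirely, invoking instead the canonical isomorphism $\Z{\M{B}}\cong\Z{\M{A}}$ coming from the multiplier extension of $\Y$; this is more direct and conceptually cleaner, though it obliges you to check non-degeneracy by hand, whereas the paper gets it for free from \cite[Proposition~C.5]{Williams}. Both routes finish the fiber identification $\func{J}{A,\Phi;x}=\func{h_{\Y}}{\func{J}{B,\Psi;x}}$ by the same fullness-and-density reasoning. Your closing worry about a stray conjugate is well placed but ultimately harmless: in the paper's bra--ket convention the $A$-valued pairing is $\C$-linear in the second slot (as one sees from the scalar being pulled inside without conjugation in their computation), so no $\overline{\varphi}$ appears; under the Raeburn--Williams convention one would instead move $\func{\Phi}{\varphi}$ onto the first slot and reach the same conclusion.
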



\begin{proof}
Let $ \I{A} $ and $ \I{B} $ denote, respectively, the set of ideals of $ A $ and $ B $. Let $ \Prim{A} $ and $ \Prim{B} $ denote, respectively, the set of primitive ideals of $ A $ and $ B $. Recall that $ h_{\Y} $ is a lattice-structure-preserving bijection from $ \I{B} $ to $ \I{A} $, and that the restriction of $ h_{\Y} $ to $ \Prim{B} $ is a homeomorphism from $ \Prim{B} $ to $ \Prim{A} $, if both sets are equipped with the Jacobson topology.

Let $ \Omega_{A}: \Cb{\Prim{A}} \to \Z{\M{A}} $ and $ \Omega_{B}: \Cb{\Prim{B}} \to \Z{\M{B}} $ denote, respectively, the $ \ast $-isomorphisms for $ A $ and $ B $ coming from the Dauns-Hofmann Theorem. As $ \Pair{B}{\Psi} $ is a $ \Co{X} $-algebra, \cite[Proposition C.5]{Williams} tells us that there exists a continuous map $ \sigma: \Prim{B} \to X $ such that
$$
\forall \varphi \in \Co{X}: \quad
\func{\Psi}{\varphi} = \func{\Omega_{B}}{\varphi \circ \sigma}.
$$
Define a continuous map $ \tau: \Prim{A} \to X $ by
$$
\forall P \in \Prim{A}: \quad
\func{\tau}{P} \df \func{\sigma}{\func{h_{\Y}^{- 1}}{P}}.
$$
Define $ \Phi: \Co{X} \to \Z{\M{A}} $ by
$$
\forall \varphi \in \Co{X}: \quad
\func{\Phi}{\varphi} = \func{\Omega_{A}}{\varphi \circ \tau}.
$$
Then by \cite[Proposition C.5]{Williams} again, $ \Pair{A}{\Phi} $ is a $ \Co{X} $-algebra.

Now, we have for every $ \varphi \in \Co{X} $, $ \zeta,\eta \in \Y $, $ b \in B $, and $ P \in \Prim{A} $ that
\begin{align*}
        \FUNC{\func{\Phi}{\varphi}}{\Inner{\zeta}{\eta \bullet b}{\Y,A}} + P
=   & ~ \FUNC{\func{\Omega_{A}}{\varphi \circ \tau}}{\Inner{\zeta}{\eta \bullet b}{\Y,A}} + P \\
=   & ~ \func{\varphi}{\func{\tau}{P}} \cdot \Inner{\zeta}{\eta \bullet b}{\Y,A} + P \\
    & ~ \Br{\text{By the Dauns-Hofmann Theorem.}} \\
=   & ~ \Inner{\zeta}{\func{\varphi}{\func{\tau}{P}} \cdot \Br{\eta \bullet b}}{\Y,A} + P \\
=   & ~ \Inner{\zeta}{\eta \bullet \SqBr{\func{\varphi}{\func{\tau}{P}} \cdot b}}{\Y,A} + P \\
=   & ~ \Inner{\zeta}{\eta \bullet \SqBr{\func{\varphi}{\func{\tau}{P}} \cdot b + Q}}{\Y,A} + P \\
    & ~ \Br{\text{Apply Rieffel correspondence to $ Q \df \func{h_{\Y}^{- 1}}{P} $.}} \\
=   & ~ \Inner{\zeta}{\eta \bullet \SqBr{\func{\varphi}{\func{\sigma}{\func{h_{\Y}^{- 1}}{P}}} \cdot b + Q}}{\Y,A} + P \\
=   & ~ \Inner{\zeta}{\eta \bullet \SqBr{\func{\varphi}{\func{\sigma}{Q}} \cdot b + Q}}{\Y,A} + P \\
=   & ~ \Inner{\zeta}{\eta \bullet \SqBr{\Func{\varphi \circ \sigma}{Q} \cdot b + Q}}{\Y,A} + P \\
=   & ~ \Inner{\zeta}{\eta \bullet \SqBr{\FUNC{\func{\Omega_{B}}{\varphi \circ \sigma}}{b} + Q}}{\Y,A} + P \\
    & ~ \Br{\text{By the Dauns-Hofmann Theorem again.}} \\
=   & ~ \Inner{\zeta}{\eta \bullet \SqBr{\FUNC{\func{\Psi}{\varphi}}{b} + Q}}{\Y,A} + P \\
=   & ~ \Inner{\zeta}{\eta \bullet \FUNC{\func{\Psi}{\varphi}}{b}}{\Y,A} + P. \\
    & ~ \Br{\text{By Rieffel correspondence again.}}
\end{align*}
As $ \D \bigcap_{P \in \Prim{A}} = \SSet{0_{A}} $, we find that
$$
\forall \varphi \in \Co{X}, ~ \forall \zeta,\eta \in \Y, ~ \forall b \in B: \quad
\FUNC{\func{\Phi}{\varphi}}{\Inner{\zeta}{\eta \bullet b}{\Y,A}} = \Inner{\zeta}{\eta \bullet \FUNC{\func{\Psi}{\varphi}}{b}}{\Y,A}.
$$
As $ \Y $ is a full Hilbert $ \Pair{A}{B} $-bimodule, it follows that $ \func{J}{A,\Phi;x} = \func{h_{\Y}}{\func{J}{B,\Psi;x}} $ for every $ x \in X $, so
$ \Pair{A}{\Phi}_{x} = \Quot{A}{\func{h_{\Y}}{\func{J}{B,\Psi;x}}} $ as required.
\end{proof}


For the rest of this section, we will assume that $ \G $ is a second-countable locally compact Hausdorff groupoid, $ \lambda $ is a Haar system on $ \G $, and $ \alpha $ is an action of $ \G $ on a separable $ \Co{\Unit{G}} $-algebra $ A $. Hence, $ \GG \df \Quad{\G}{A}{\Phi}{\alpha} $ is a separable $ C^{\ast} $-dynamical system. For each $ u,v \in \Unit{\G} $, let
\begin{align*}
\G_{u}     & \df \Set{\gamma \in \G}{\func{s_{\G}}{\gamma} = u}, \\
\G^{v}     & \df \Set{\gamma \in \G}{\func{r_{\G}}{\gamma} = v}, \\
\G_{u}^{v} & \df \G_{u} \cap \G^{v}.
\end{align*}
Let $ \func{r_{\G}^{\ast}}{\A^{A,\Phi},p^{A,\Phi}} $ denote the pullback of $ \Pair{\A^{A,\Phi}}{p^{A,\Phi}} $ under $ r_{\G} $, i.e.,
$$
    \func{r_{\G}^{\ast}}{\A^{A,\Phi},p^{A,\Phi}}
\df \Pair{\Set{\Pair{\gamma}{a} \in \G \times \A^{A,\Phi}}{\func{r_{\G}}{\gamma} = \func{p^{A,\Phi}}{a}}}{p^{A,\Phi} \circ r_{\G}}.
$$
Then $ \func{r_{\G}^{\ast}}{\A^{A,\Phi},p^{A,\Phi}} $ is an upper-semicontinuous $ C^{\ast} $-bundle over $ \G $, and we will make the identification
$$
\Gammc{\func{r_{\G}^{\ast}}{\A^{A,\Phi},p_{A,\Phi}}}
~ \leftrightarrow ~
\Set{f \in \Cc{\G,\A^{A,\Phi}}}{\func{f}{\gamma} \in \Pair{A}{\Phi}_{\func{r_{\G}}{\gamma}} ~ \text{for all} ~ \gamma \in \G}.
$$



\begin{Def}[\cite{Goehle,Khoshkam/Skandalis}]
The \emph{full groupoid crossed product} of $ \Pair{\GG}{\lambda} $, denoted by $ \FCP{\GG}{\lambda} $, is defined as the universal enveloping $ C^{\ast} $-algebra of the normed convolution $ \ast $-algebra
$$
\Quad{\Gammc{\func{r_{\G}^{\ast}}{\A^{A,\Phi},p^{A,\Phi}}}}{\star_{\GG,\lambda}}{^{\ast_{\GG,\lambda}}}{\Norm{\cdot}_{I}^{\GG,\lambda}},
$$
and we let $ \pi_{\u}^{\GG,\lambda} $ denote the canonical injective $ \ast $-homomorphism
$$
\Trip{\Gammc{\func{r_{\G}^{\ast}}{\A^{A,\Phi},p^{A,\Phi}}}}{\star_{\GG,\lambda}}{^{\ast_{\GG,\lambda}}} \hookrightarrow \FCP{\GG}{\lambda}.
$$

The \emph{reduced groupoid crossed product} of $ \Pair{\GG}{\lambda} $, denoted by $ \RCP{\GG}{\lambda} $, is defined as the closure of $ \Trip{\Gammc{\func{r_{\G}^{\ast}}{\A^{A,\Phi},p^{A,\Phi}}}}{\star_{\GG,\lambda}}{^{\ast_{\GG,\lambda}}} $ with respect to the norm $ \Norm{\cdot}_{\r}^{\GG,\lambda} $ induced from regular representations.  We let $ \pi_{\r}^{\GG,\lambda} $ denote the canonical injective $ \ast $-homomorphism
$$
\Trip{\Gammc{\func{r_{\G}^{\ast}}{\A^{A,\Phi},p^{A,\Phi}}}}{\star_{\GG,\lambda}}{^{\ast_{\GG,\lambda}}} \hookrightarrow \RCP{\GG}{\lambda}.
$$
\end{Def}


For more information on groupoid crossed products, please see the excellent exposition in \cite{Muhly/Williams}.



\begin{Def}[\cite{Brown}] \label{Proper Groupoid Dynamical System}
A \emph{proper groupoid dynamical system} is a triple $ \PGDS{\GG}{\lambda}{A_{0}} $ with the following properties:
\begin{enumerate}
\item
$ \GG = \Quad{\G}{A}{\Phi}{\alpha} $ is a (separable) groupoid dynamical system.

\item
$ \lambda $ is a Haar system on $ \G $.

\item
$ A_{0} $ is a dense $ \ast $-subalgebra of $ A $.

\item
For each $ a,b \in A_{0} $, the continuous map
$$
\Map{\G}{\A^{A,\Phi}}{\gamma}{\func{a}{\func{r_{\G}}{\gamma}}^{\ast} \alphArg{\gamma}{\func{b}{\func{s_{\G}}{\gamma}}}}
$$
is such that for any net $ \Seq{\varphi_{i}}{i \in I} $ in $ \Cc{\G,\CC{0}{1}} $ converging uniformly to $ 1 $ on compact subsets of $ \G $, the corresponding net
$$
\Seq{\Map{\G}{\A^{A,\Phi}}{\gamma}{\func{\varphi_{i}}{\gamma} \cdot \func{a}{\func{r_{\G}}{\gamma}}^{\ast} \alphArg{\gamma}{\func{b}{\func{s_{\G}}{\gamma}}}}}{i \in I}
$$
in $ \Gammc{\func{r_{\G}^{\ast}}{\A^{A,\Phi},p^{A,\Phi}}} $ is Cauchy with respect to $ \Norm{\cdot}_{I}^{\GG,\lambda} $.

\item
For each $ a,b \in A_{0} $, there exists a (necessarily unique) $ m \in \M{A} $ having the following properties:
\begin{itemize}
\item
$ \Im{m}{A_{0}} \subseteq A_{0} $.

\item
For each $ \gamma \in \G $, we have $ \alphExtArg{\gamma}{m_{\func{s_{\G}}{\gamma}}} = m_{\func{r_{\G}}{\gamma}} $, where $ \overline{\alpha}_{\gamma} $ denotes the unique $ \ast $-isomorphism from $ \M{\Pair{A}{\Phi}_{\func{s_{\G}}{\gamma}}} $ to $ \M{\Pair{A}{\Phi}_{\func{r_{\G}}{\gamma}}} $ that extends $ \alpha_{\gamma} $.

\item
For each $ c \in A_{0} $ and $ u \in \Unit{\G} $, we have
$$
\FUNC{\func{m}{c}}{u} = \Int{\G^{u}}{\alphArg{\gamma}{\Func{a b^{\ast}}{\func{s_{\G}}{\gamma}}} \func{c}{\func{r_{\G}}{\gamma}}}{\lambda^{u}}{\gamma}.
$$
\end{itemize}
\end{enumerate}
Define $ \Inner{\cdot}{\cdot}{\PGDS{\GG}{\lambda}{A_{0}}}^{E}: A_{0} \times A_{0} \to \RCP{\GG}{\lambda} $ and $ \Inner{\cdot}{\cdot}{\PGDS{\GG}{\lambda}{A_{0}}}^{D}: A_{0} \times A_{0} \to \M{A} $ by
\begin{gather*}
\forall a,b \in A_{0}: \\
\begin{align*}
      \Inner{a}{b}{\PGDS{\GG}{\lambda}{A_{0}}}^{E}
& \df \lim_{i \in I}
      \func{\pi_{\r}^{\GG,\lambda}}
           {\Map{\G}{\A^{A,\Phi}}{\gamma}{\func{\varphi_{i}}{\gamma} \cdot \func{a}{\func{r_{\G}}{\gamma}}^{\ast} \alphArg{\gamma}{\func{b}{\func{s_{\G}}{\gamma}}}}}; \\
      \Inner{a}{b}{\PGDS{\GG}{\lambda}{A_{0}}}^{D}
& \df \text{The unique element} ~ m ~ \text{of} ~ \M{A} ~ \text{that satisfies (v)},
\end{align*}
\end{gather*}
where $ \Seq{\varphi_{i}}{i \in I} $ can be chosen to be any net in $ \Cc{\G,\CC{0}{1}} $ that converges uniformly to $ 1 $ on compact subsets of $ \G $, without risk of ambiguity. Let
\begin{align*}
\Eo{\GG}{\lambda}{A_{0}}  & \df \Span{\Set{\Inner{a}{b}{\PGDS{\GG}{\lambda}{A_{0}}}^{E}}{a,b \in A_{0}}} \subseteq \RCP{\GG}{\lambda}; \\
\E{\GG}{\lambda}{A_{0}}   & \df \Cl{\Eo{\GG}{\lambda}{A_{0}}}{\RCP{\GG}{\lambda}}; \\
\Do{\GG}{\lambda}{A_{0}}  & \df \Span{\Set{\Inner{a}{b}{\PGDS{\GG}{\lambda}{A_{0}}}^{D}}{a,b \in A_{0}}} \subseteq \M{A}; \\
\Fix{\GG}{\lambda}{A_{0}} & \df \Cl{\Do{\GG}{\lambda}{A_{0}}}{\M{A}}.
\end{align*}
Define a right $ \Eo{\GG}{\lambda}{A_{0}} $-action $ \circledast_{\PGDS{\GG}{\lambda}{A_{0}}}: A_{0} \times \Eo{\GG}{\lambda}{A_{0}} \to A_{0} $ on $ A_{0} $ by
\begin{gather*}
\forall a,a_{1},\ldots,a_{n},b_{1},\ldots,b_{n} \in A_{0}: \\
    a \circledast_{\PGDS{\GG}{\lambda}{A_{0}}} \sum_{j = 1}^{n} \Inner{a_{j}}{b_{j}}{\PGDS{\GG}{\lambda}{A_{0}}}^{E}
\df \sum_{j = 1}^{n} \func{\Inner{a}{a_{j}}{\PGDS{\GG}{\lambda}{A_{0}}}^{D}}{b_{j}}.
\end{gather*}
\end{Def}


In \cite{Rieffel2}, the definition of a proper $ C^{\ast} $-dynamical system is simpler as it does not involve Haar systems. Haar measures are unique up to positive scalings, and it can be shown that the resulting theory is independent of the choice of a Haar measure.

By the results of \cite{Brown} (mirroring those in \cite{Rieffel2}), the following statements hold:
\begin{itemize}
\item
$ \Eo{\GG}{\lambda}{A_{0}} $ is a $ \ast $-subalgebra of $ \RCP{\GG}{\lambda} $.

\item
$ \Do{\GG}{\lambda}{A_{0}} $ is a $ \ast $-subalgebra of $ \M{A} $.

\item
The triple
$$
\Xo{\GG}{\lambda}{A_{0}} \df \Trip{A_{0}}{\circledast_{\Br{\GG,\lambda;A_{0}}}}{\Inner{\cdot}{\cdot}{\PGDS{\GG}{\lambda}{A_{0}}}^{E}}
$$
gives us a pre-imprimitivity $ \Pair{\Do{\GG}{\lambda}{A_{0}}}{\Eo{\GG}{\lambda}{A_{0}}} $-bimodule.
\end{itemize}
We call $ \Fix{\GG}{\lambda}{A_{0}} $ a \emph{generalized fixed-point algebra}. By completing $ \Xo{\GG}{\lambda}{A_{0}} $, we obtain an imprimitivity $ \Pair{\Fix{\GG}{\lambda}{A_{0}}}{\E{\GG}{\lambda}{A_{0}}} $-bimodule $ \X{\GG}{\lambda}{A_{0}} $.


\section{A Bundle Structure for Generalized Fixed-Point Algebras}


In this section, we make the following standing assumptions:
\begin{enumerate}
\item \label{it1}
$ \GG = \Quad{\G}{A}{\Phi}{\alpha} $ is a separable groupoid dynamical system.

\item
$ \PGDS{\GG}{\lambda}{A_{0}} $ is a proper groupoid dynamical system.

\item
$ X $ is a locally compact Hausdorff space.

\item \label{it4}
$ q $ is a continuous map from $ \Unit{\G} $ to $ X $. \\
\textbf{Note:} This means that $ \Pair{A}{\Psi} $ is a $ \Co{X} $-algebra, with $ \Psi $ defined by
$$
\forall \varphi \in \Co{X}, ~ \forall a \in A: \quad
\FUNC{\func{\Psi}{\varphi}}{a} \df \FUNC{\func{\Phi^{\e}}{\varphi \circ q}}{a},
$$
where $ \Phi^{\e} $ denotes the $ \ast $-homomorphism from $ \Cb{X} $ to $ \Z{\M{A}} $ that extends $ \Phi $.

\item
$ \InvIm{q}{\SSet{x}} $ is $ \G $-invariant for each $ x \in X $, i.e., $ \func{q}{\func{s_{\G}}{\gamma}} = \func{q}{\func{r_{\G}}{\gamma}} $ for all $ \gamma \in \G $. \\
\textbf{Note:} For any open/closed subset $ Y $ of $ X $, adopt the following notation:
\begin{align*}
\G|_{Y}      & \df \Set{\gamma \in \G}{\func{q}{\func{r_{\G}}{\gamma}} \in Y}; \\
\lambda|_{Y} & \df \Seq{\lambda^{u}}{u \in \InvIm{q}{Y}}; \\
A|_{Y}       & \df \Set{\Seq{\func{a}{u}}{u \in \InvIm{q}{Y}}}{a \in A}; \\
\Phi|_{Y}    & \df \Map{\Co{\InvIm{q}{Y}}}{\Z{\M{A|_{Y}}}}{\varphi}{\text{Pointwise multiplication by} ~ \varphi}; \\
\alpha|_{Y}  & \df \Seq{\alpha_{\gamma}}{\gamma \in \G|_{Y}},
                   ~ \text{identifying} ~ \Pair{A|_{Y}}{\Phi|_{Y}}_{u} ~ \text{with} ~ \Pair{A}{\Phi}_{u} ~ \text{for} ~ u \in \InvIm{q}{\SSet{x}}.
\end{align*}
Then for each $ x \in X $, the given condition means that $ \G|_{\SSet{x}} $ is a closed sub-groupoid of $ \G $ and that $ \Unit{\Br{\G|_{\SSet{x}}}} = \InvIm{q}{\SSet{x}} $.

\item
$ \E{\GG}{\lambda}{A_{0}} $ is not just a $ \ast $-subalgebra but also an ideal of $ \RCP{\GG}{\lambda} $. This is guaranteed by an extra assumption in Rieffel's definition of proper $ C^{\ast} $-dynamical systems in \cite{Rieffel2} that is excluded from the definition of proper groupoid dynamical systems in \cite{Brown}, due to certain technical obstacles. However, \cite[Proposition A.1]{Brown/Goehle} does provide a checkable condition on a proper groupoid dynamical system ensuring that $ \E{\GG}{\lambda}{A_{0}} $ is an ideal.

\item
For each $ x \in X $, we have $ \RCP{\GG|_{\SSet{x}}}{\lambda|_{\SSet{x}}} = \FCP{\GG|_{\SSet{x}}}{\lambda|_{\SSet{x}}} $, where
$$
\GG|_{Y} \df \Quad{\G|_{Y}}{A|_{Y}}{\Phi|_{Y}}{\alpha|_{Y}}
$$
for any open/closed subset $ Y $ of $ X $.

\item \label{itlast}
$ A_{0} $ is closed under the left action $ \Psi $ of $ \Co{X} $ on $ A $.
\end{enumerate}

For now, fix $ x \in X $, and let
$$
A_{0}|_{\SSet{x}} \df \Set{\Seq{\func{a}{u}}{u \in \InvIm{q}{\SSet{x}}}}{a \in A_{0}}.
$$
Then $ A_{0}|_{\SSet{x}} $ is dense in $ A|_{\SSet{x}} $. Indeed, according to the Tietze Extension Theorem for upper-semicontinuous Banach bundles, we have
$$
A|_{\SSet{x}} = \Gammo{\Pair{\A^{A,\Phi}}{p^{A,\Phi}} \big|_{\InvIm{q}{\SSet{x}}}},
$$
and $ A_{0} $ is assumed to be already dense in $ A $.

Next, define a $ \ast $-homomorphism $ \Xi_{x}: \Fix{\GG}{\lambda}{A_{0}} \to \M{A|_{\SSet{x}}} $ by
\begin{gather*}
\forall m \in  \Fix{\GG}{\lambda}{A_{0}}, ~ \forall a \in A: \\
\FUNC{\func{\Xi_{x}}{m}}{\Seq{\func{a}{u}}{u \in \InvIm{q}{\SSet{x}}}} \df \Seq{\FUNC{\func{m}{a}}{u}}{u \in \InvIm{q}{\SSet{x}}}.
\end{gather*}
To see that $ \PGDS{\GG|_{\SSet{x}}}{\lambda|_{\SSet{x}}}{A_{0}|_{\SSet{x}}} $ is a proper groupoid dynamical system, note for each $ a,b \in A_{0}|_{\SSet{x}} $ that whenever $ a^{\sharp},b^{\sharp} \in A_{0} $ are chosen to satisfy
$$
\forall u \in \InvIm{q}{\SSet{x}}: \quad
\func{a^{\sharp}}{u} = \func{a}{u}
\quad \text{and} \quad
\func{b^{\sharp}}{u} = \func{b}{u},
$$
then the unique $ m \in \M{A|_{\SSet{x}}} $ satisfying (v) of \cref{Proper Groupoid Dynamical System} is $ \func{\Xi_{x}}{\Inner{a^{\sharp}}{b^{\sharp}}{\PGDS{\GG}{\lambda}{A_{0}}}^{D}} $.

Our goal now is to prove the following generalization of \cite[Theorem 3.2]{Rieffel2}.



\begin{Thm} \label{Main Theorem}
Under Assumptions \eqref{it1}-\eqref{itlast} above, the following are true:
\begin{enumerate}
\item \label{Main Proposition - Part 1}
$ \Pair{\Fix{\GG}{\lambda}{A_{0}}}{\Upsilon} $ is a $ \Co{X} $-algebra, where $ \Upsilon $ is uniquely characterized by
$$
  \FUNC{\func{\Upsilon}{\varphi}}{\sum_{j = 1}^{n} \Inner{a_{j}}{b_{j}}{\PGDS{\GG}{\lambda}{A_{0}}}^{D}}
= \sum_{j = 1}^{n} \Inner{\FUNC{\func{\Psi}{\overline{\varphi}}}{a_{j}}}{b_{j}}{\PGDS{\GG}{\lambda}{A_{0}}}^{D}
$$
for all $ \varphi \in \Co{X} $ and $ a_{1},\ldots,a_{n},b_{1},\ldots,b_{n} \in A_{0} $, and $ \Psi $ is defined as in Assumption \eqref{it4}.

\item \label{Main Proposition - Part 2}
$ \Pair{\Fix{\GG}{\lambda}{A_{0}}}{\Upsilon}_{x} \cong \Fix{\GG|_{\SSet{x}}}{\lambda|_{\SSet{x}}}{A_{0}|_{\SSet{x}}} $ for all $ x \in X $.
\end{enumerate}
\end{Thm}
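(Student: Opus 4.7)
The plan is to propagate a $\Co{X}$-algebra structure from the coefficient algebra $\Pair{A}{\Psi}$ through the reduced crossed product $\RCP{\GG}{\lambda}$, restrict to the ideal $\E{\GG}{\lambda}{A_{0}}$ via \cref{An Ideal of a C0(X)-Algebra Is a C0(X)-Algebra}, and then transfer across the imprimitivity bimodule $\X{\GG}{\lambda}{A_{0}}$ via \cref{Transferring C0(X)-Algebra Structures Across Imprimitivity Bimodules}, tracking fibers throughout.

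First, equip $\RCP{\GG}{\lambda}$ with a $\Co{X}$-algebra structure $\Upsilon_{R}$ defined on sections by $\Func{\func{\Upsilon_{R}}{\varphi} \cdot f}{\gamma} \df \FUNC{\func{\Psi}{\varphi}|_{\func{r_{\G}}{\gamma}}}{\func{f}{\gamma}}$. Assumption \eqref{it4} forces $q \circ r_{\G} = q \circ s_{\G}$, which makes this pointwise multiplier commute with the convolution and involution, so it extends to a central, non-degenerate $\ast$-homomorphism into $\Z{\M{\RCP{\GG}{\lambda}}}$. Standard restriction theory for crossed products identifies $\Pair{\RCP{\GG}{\lambda}}{\Upsilon_{R}}_{x} \cong \RCP{\GG|_{\SSet{x}}}{\lambda|_{\SSet{x}}}$, which by Assumption (vii) equals $\FCP{\GG|_{\SSet{x}}}{\lambda|_{\SSet{x}}}$. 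Assumption (vi) and \cref{An Ideal of a C0(X)-Algebra Is a C0(X)-Algebra} then yield a restricted structure $\Upsilon_{E}$ on $\E{\GG}{\lambda}{A_{0}}$; applying $\Upsilon_{R}$ pointwise to the defining net of $\Inner{a}{b}{\PGDS{\GG}{\lambda}{A_{0}}}^{E}$ and using the centrality of $\func{\Psi}{\varphi}$ in $\M{A}$ gives the explicit formula
$$\func{\Upsilon_{E}}{\varphi}\Inner{a}{b}{\PGDS{\GG}{\lambda}{A_{0}}}^{E} = \Inner{\FUNC{\func{\Psi}{\overline{\varphi}}}{a}}{b}{\PGDS{\GG}{\lambda}{A_{0}}}^{E},$$
where Assumption \eqref{itlast} keeps $\FUNC{\func{\Psi}{\overline{\varphi}}}{a}$ inside $A_{0}$.

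Next, \cref{Transferring C0(X)-Algebra Structures Across Imprimitivity Bimodules} applied to $\Y = \X{\GG}{\lambda}{A_{0}}$ produces the desired structure $\Upsilon$ on $\Fix{\GG}{\lambda}{A_{0}}$. To verify the characterization formula of (i), use Cohen--Hewitt to factor $b = \eta \cdot e$ with $\eta \in \X{\GG}{\lambda}{A_{0}}$ and $e \in \E{\GG}{\lambda}{A_{0}}$, so that \cref{Transferring C0(X)-Algebra Structures Across Imprimitivity Bimodules} gives $\func{\Upsilon}{\varphi} \Inner{a}{b}{\PGDS{\GG}{\lambda}{A_{0}}}^{D} = \Inner{a}{\eta \cdot \func{\Upsilon_{E}}{\varphi} e}{\PGDS{\GG}{\lambda}{A_{0}}}^{D}$. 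Combining the imprimitivity identity $\func{\Inner{\mu}{\nu}{\PGDS{\GG}{\lambda}{A_{0}}}^{D}}{\xi} = \mu \cdot \Inner{\nu}{\xi}{\PGDS{\GG}{\lambda}{A_{0}}}^{E}$ with the explicit formula for $\Upsilon_{E}$ and the centrality of $\func{\Psi}{\varphi}$, one shows that the right $\M{\E{\GG}{\lambda}{A_{0}}}$-action of $\Upsilon_{E}(\varphi)$ on $\X{\GG}{\lambda}{A_{0}}$ coincides with left multiplication by $\func{\Psi}{\varphi}$, whence $\eta \cdot \func{\Upsilon_{E}}{\varphi} e = \FUNC{\func{\Psi}{\varphi}}{b}$. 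A final centrality step converts $\Inner{a}{\FUNC{\func{\Psi}{\varphi}}{b}}{\PGDS{\GG}{\lambda}{A_{0}}}^{D}$ into $\Inner{\FUNC{\func{\Psi}{\overline{\varphi}}}{a}}{b}{\PGDS{\GG}{\lambda}{A_{0}}}^{D}$, as claimed.

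For part (ii), \cref{Transferring C0(X)-Algebra Structures Across Imprimitivity Bimodules} gives $\Pair{\Fix{\GG}{\lambda}{A_{0}}}{\Upsilon}_{x} \cong \Quot{\Fix{\GG}{\lambda}{A_{0}}}{\func{h_{\Y}}{\func{J}{\E{\GG}{\lambda}{A_{0}},\Upsilon_{E};x}}}$. The fiber computation from the first step identifies $\Pair{\E{\GG}{\lambda}{A_{0}}}{\Upsilon_{E}}_{x}$ with $\E{\GG|_{\SSet{x}}}{\lambda|_{\SSet{x}}}{A_{0}|_{\SSet{x}}}$: the generating sections $\Inner{a}{b}{\PGDS{\GG}{\lambda}{A_{0}}}^{E}$ on $\G$ restrict to the analogous inner products of the system over $\G|_{\SSet{x}}$, with Assumption \eqref{itlast} and the density statement preceding the theorem supplying surjectivity. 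The Rieffel correspondence then identifies the quotient bimodule $\Quot{\X{\GG}{\lambda}{A_{0}}}{\X{\GG}{\lambda}{A_{0}} \cdot \func{J}{\E{\GG}{\lambda}{A_{0}},\Upsilon_{E};x}}$ with $\X{\GG|_{\SSet{x}}}{\lambda|_{\SSet{x}}}{A_{0}|_{\SSet{x}}}$, so its left coefficient algebra is identified with $\Fix{\GG|_{\SSet{x}}}{\lambda|_{\SSet{x}}}{A_{0}|_{\SSet{x}}}$, yielding the claimed isomorphism. The main obstacle is this bimodule-level fiber identification: one must verify that the restriction-to-$\G|_{\SSet{x}}$ map intertwines all four bimodule structures on $\X{\GG}{\lambda}{A_{0}}$ and is surjective onto the bimodule for the restricted system — the computations developed earlier in the plan, together with the explicit description of restricted proper systems laid out before the theorem, provide the requisite ingredients.
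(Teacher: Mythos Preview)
Your overall plan matches the paper's: put a $\Co{X}$-structure on the crossed product, restrict to the ideal $\E{\GG}{\lambda}{A_{0}}$ via \cref{An Ideal of a C0(X)-Algebra Is a C0(X)-Algebra}, transfer across $\X{\GG}{\lambda}{A_{0}}$ via \cref{Transferring C0(X)-Algebra Structures Across Imprimitivity Bimodules}, and then identify fibers through the quotient bimodule. The paper's argument for part~(ii) is essentially your final paragraph made explicit (it constructs the restriction map $T$, factors it through a quotient map $S$, checks $S$ is a unitary isomorphism of Hilbert modules, and reads off the compact-operator algebras).

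There is, however, one genuine gap. You assert that ``standard restriction theory'' yields $\Pair{\RCP{\GG}{\lambda}}{\Upsilon_{R}}_{x} \cong \RCP{\GG|_{\SSet{x}}}{\lambda|_{\SSet{x}}}$, but for \emph{reduced} groupoid crossed products this is not standard: the restriction map is surjective and kills $\func{J}{\RCP{\GG}{\lambda},\Upsilon_{R};x}$, yet the reverse inclusion of ideals can fail absent an exactness hypothesis. The paper circumvents this by working first with the \emph{full} crossed product, where Goehle's short exact sequence (\cref{Fibering a Full Groupoid Crossed Product}) gives the fiber identification honestly, and then proving (\cref{Reduced Equals Full}), via an irreducible-representation argument, that Assumption~(vii) forces $\RCP{\GG}{\lambda} = \FCP{\GG}{\lambda}$ \emph{globally}. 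This global coincidence is the essential use of Assumption~(vii); invoking it only at individual fibers, as you do, is not sufficient. Once reduced equals full, your $\Upsilon_{R}$ coincides with the paper's $\Theta$ and the remainder of your argument (in particular the fiber computation for $\E{\GG}{\lambda}{A_{0}}$, which is the paper's \cref{A Particular *-Isomorphism}) goes through. Two minor notes: the invariance condition $q \circ r_{\G} = q \circ s_{\G}$ is Assumption~(v), not~(iv); and your Cohen--Hewitt route to the $\Upsilon$ formula is correct but more roundabout than the paper's, which simply writes $b_{j} = c_{j} \circledast_{\PGDS{\GG}{\lambda}{A_{0}}} f_{j}$ on a dense set and computes directly with the integral formula in \cref{Proper Groupoid Dynamical System}(v).
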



Before proving this theorem, we need a few more lemmas.



\begin{Lem} \label{Fibering a Full Groupoid Crossed Product}
There exists a $ \Theta $ such that
\begin{enumerate}
\item
$ \Pair{\FCP{\GG}{\lambda}}{\Theta} $ is a $ \Co{X} $-algebra, and

\item
$ \Pair{\FCP{\GG}{\lambda}}{\Theta}_{x} \cong \FCP{\GG|_{\SSet{x}}}{\lambda|_{\SSet{x}}} $ for each $ x \in X $.
\end{enumerate}
\end{Lem}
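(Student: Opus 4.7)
\medskip

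\textbf{Proof plan.} The strategy is to pull the $\Co{X}$-algebra structure on $A$ (given by $\Psi$) up to a central multiplier structure on the full crossed product, and then to identify each fibre via restriction to the closed subgroupoid $\G|_{\SSet{x}}$.

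\emph{Step 1: Constructing $\Theta$.} For $\varphi \in \Co{X}$ and $f \in \Gammc{\func{r_{\G}^{\ast}}{\A^{A,\Phi},p^{A,\Phi}}}$, define
\[
\FUNC{\func{\Theta_{0}}{\varphi}}{f}(\gamma) \df \func{\varphi}{\func{q}{\func{r_{\G}}{\gamma}}} \cdot \func{f}{\gamma} = \FUNC{\func{\Psi}{\varphi}}{\func{f}{\gamma}},
\]
the two expressions agreeing because $\Psi$ acts on $\Pair{A}{\Phi}_{\func{r_{\G}}{\gamma}}$ as multiplication by the scalar $\varphi(q(r_{\G}(\gamma)))$. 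The $\G$-invariance of $q$ (Assumption (v)) gives $\func{\varphi}{\func{q}{\func{r_{\G}}{\gamma}}} = \func{\varphi}{\func{q}{\func{s_{\G}}{\gamma}}}$, from which a direct computation shows that $\func{\Theta_{0}}{\varphi}$ is a central multiplier of the convolution $\ast$-algebra $\Trip{\Gammc{\func{r_{\G}^{\ast}}{\A^{A,\Phi},p^{A,\Phi}}}}{\star_{\GG,\lambda}}{^{\ast_{\GG,\lambda}}}$ and is bounded by $\Norm{\varphi}_{\infty}$ in $I$-norm. By the universal property of the full crossed product, $\Theta_{0}$ extends to a non-degenerate $\ast$-homomorphism $\Theta : \Co{X} \to \Z{\M{\FCP{\GG}{\lambda}}}$, proving (i).

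\emph{Step 2: Building the quotient map.} For fixed $x \in X$, define the restriction
\[
\rho_{x} : \Gammc{\func{r_{\G}^{\ast}}{\A^{A,\Phi},p^{A,\Phi}}} \to \Gammc{\func{r_{\G|_{\SSet{x}}}^{\ast}}{\A^{A|_{\SSet{x}},\Phi|_{\SSet{x}}}, p^{A|_{\SSet{x}},\Phi|_{\SSet{x}}}}}, \quad \rho_{x}(f) \df f \big|_{\G|_{\SSet{x}}}.
\]
This is a $\ast$-homomorphism and contracts the $I$-norm (the measures $\lambda|_{\SSet{x}}^{u}$ for $u \in \InvIm{q}{\SSet{x}}$ are just the original $\lambda^{u}$). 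By universality, $\rho_{x}$ extends to a $\ast$-homomorphism $\widetilde{\rho}_{x} : \FCP{\GG}{\lambda} \to \FCP{\GG|_{\SSet{x}}}{\lambda|_{\SSet{x}}}$. Surjectivity of $\widetilde{\rho}_{x}$ follows from the Tietze extension theorem for upper-semicontinuous $C^{\ast}$-bundles, together with a standard partition-of-unity argument to extend elements of $\Gammc{\G|_{\SSet{x}}, \cdot}$ to compactly supported sections on $\G$.

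\emph{Step 3: Identifying the kernel.} Let $J_{x} \df \func{J}{\FCP{\GG}{\lambda}, \Theta; x}$. The inclusion $J_{x} \subseteq \ker\widetilde{\rho}_{x}$ is immediate: if $\func{\varphi}{x} = 0$ and $\gamma \in \G|_{\SSet{x}}$, then $\func{\varphi}{\func{q}{\func{r_{\G}}{\gamma}}} = \func{\varphi}{x} = 0$, so $\widetilde{\rho}_{x}\func{\Theta}{\varphi}f = 0$. For the reverse inclusion, given $f \in \Gammc{\func{r_{\G}^{\ast}}{\A^{A,\Phi}, p^{A,\Phi}}}$ with $\rho_{x}(f) = 0$, a partition-of-unity argument produces, for each $\varepsilon > 0$, a $\varphi \in \Co{X}$ with $\func{\varphi}{x} = 0$, $0 \leq \varphi \leq 1$, and $\varphi \equiv 1$ on an open neighbourhood of $\func{q}{\func{r_{\G}}{\supp f}} \setminus \SSet{x}$; the continuity of $f$ near $\G|_{\SSet{x}}$ and $f \equiv 0$ on $\G|_{\SSet{x}}$ ensure that $\func{\Theta}{\varphi}f$ is within $\varepsilon$ of $f$ in $I$-norm (hence in full crossed product norm), witnessing $f \in J_{x}$. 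Thus $\widetilde{\rho}_{x}$ descends to an isomorphism $\Pair{\FCP{\GG}{\lambda}}{\Theta}_{x} \cong \FCP{\GG|_{\SSet{x}}}{\lambda|_{\SSet{x}}}$, which is (ii).

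\emph{Expected obstacle.} The main technical point is the reverse inclusion $\ker\widetilde{\rho}_{x} \subseteq J_{x}$ in Step 3. It is easy to approximate a section $f$ vanishing on $\G|_{\SSet{x}}$ pointwise by $\Theta(\varphi) f$ with $\varphi(x) = 0$, but one must control this approximation uniformly on the (compact) support of $f$ and in $I$-norm simultaneously; doing so requires choosing $\varphi$ carefully using local compactness of $X$ and the upper semicontinuity of the norm on $\A^{A,\Phi}$, together with the fact that $f$ is small near the closed set $\G|_{\SSet{x}}$. Once the algebraic fibering is in place, surjectivity follows cleanly from Tietze extension, making the kernel step the only genuine work.
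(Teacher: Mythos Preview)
Your overall architecture matches the paper's --- define $\Theta$ by pointwise multiplication with $\varphi\circ q\circ r_{\G}$, then identify fibres via restriction to $\G|_{\SSet{x}}$ --- but two steps have genuine gaps that the paper fills differently.

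\textbf{Step 1.} Boundedness in the $I$-norm does not by itself give a multiplier of the full crossed product: you need $\Norm{\Theta_{0}(\varphi)f}_{\u}\le\Norm{\varphi}_{\infty}\Norm{f}_{\u}$ for the \emph{universal} norm, and the phrase ``universal property'' does not supply this (a multiplier of $\Gammc{\cdot}$ that is $I$-bounded need not be bounded in every $I$-bounded $\ast$-representation). The paper obtains the universal-norm bound by invoking Renault's Disintegration Theorem --- this is where the separability hypothesis on $\GG$ is used --- to write any such representation as the integrated form of a covariant one, in which multiplication by $\varphi\circ q\circ r_{\G}$ is visibly a bounded central operator. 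The non-degeneracy of $\Theta$ also needs an argument; the paper gives one via an $I$-norm approximation.

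\textbf{Step 3.} Your approximation argument only treats compactly supported sections $f$ with $\rho_{x}(f)=0$, showing these lie in $J_{x}$. That yields $\Gammc{\cdot}\cap\ker\widetilde{\rho}_{x}\subseteq J_{x}$, but not $\ker\widetilde{\rho}_{x}\subseteq J_{x}$: you have not shown that such sections are dense in $\ker\widetilde{\rho}_{x}$, and for a general surjection of $C^{\ast}$-algebras there is no reason the kernel should meet a given dense subalgebra densely. (Your description of $\varphi$ is also impossible as written: if $x$ lies in the closure of $\func{q}{\func{r_{\G}}{\supp{f}}}\setminus\SSet{x}$, you cannot have $\varphi\equiv 1$ on a neighbourhood of that set while $\func{\varphi}{x}=0$; the correct formulation excises a small neighbourhood of $x$.) The paper avoids this issue entirely by citing the short exact sequence of Goehle,
\[
0\longrightarrow\FCP{\GG|_{X\setminus\SSet{x}}}{\lambda|_{X\setminus\SSet{x}}}\longrightarrow\FCP{\GG}{\lambda}\longrightarrow\FCP{\GG|_{\SSet{x}}}{\lambda|_{\SSet{x}}}\longrightarrow 0,
\]
which identifies $\ker\widetilde{\rho}_{x}$ as the closure of sections supported in $\G|_{X\setminus\SSet{x}}$; one then checks that this ideal and $J_{x}$ share a common dense subset, namely $\Span{\Set{\FUNC{\func{\Theta}{\varphi}}{\func{\pi_{\u}^{\GG,\lambda}}{f}}}{\func{\varphi}{x}=0,~f\in\Gammc{\cdot}}}$. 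The exact sequence is the substantive input your approximation sketch is missing.
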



\begin{proof}
Let $ x \in X $. By \cite[Theorem 5.22]{Goehle}, there is a short exact sequence
$$
                                         0
\longrightarrow                          \FCP{\GG|_{X \setminus \SSet{x}}}{\lambda|_{X \setminus \SSet{x}}}
\stackrel{\overline{i}}{\longrightarrow} \FCP{\GG}{\lambda}
\stackrel{\overline{r}}{\longrightarrow} \FCP{\GG|_{\SSet{x}}}{\lambda|_{\SSet{x}}}
\longrightarrow                          0.
$$
Here,
\begin{itemize}
\item
$ \overline{i} $ extends $ i: \Gammc{\func{r_{\G}^{\ast}}{\A^{A,\Phi},p_{A,\Phi}} \big|_{\G|_{X \setminus \SSet{x}}}} \to \Gammc{\func{r_{\G}^{\ast}}{\A^{A,\Phi},p_{A,\Phi}}} $, where
$$
\forall f \in \Gammc{\func{r_{\G}^{\ast}}{\A^{A,\Phi},p_{A,\Phi}} \big|_{\G|_{X \setminus \SSet{x}}}}: \quad
\func{i}{f} \df f \cup \Set{\Pair{\gamma}{0_{\Pair{A}{\Phi}_{\func{r_{\G}}{\gamma}}}}}{\gamma \in \G|_{\SSet{x}}},
$$

\item
$ \overline{r} $ extends $ r: \Gammc{\func{r_{\G}^{\ast}}{\A^{A,\Phi},p_{A,\Phi}}} \to \Gammc{\func{r_{\G}^{\ast}}{\A^{A,\Phi},p_{A,\Phi}} \big|_{\G|_{\SSet{x}}}} $, where
$$
\forall f \in \Gammc{\func{r_{\G}^{\ast}}{\A^{A,\Phi},p_{A,\Phi}}}: \quad
\func{r}{f} \df f|_{\G|_{\SSet{x}}}.
$$
\end{itemize}
Hence, the isometric image of $ \FCP{\GG|_{X \setminus \SSet{x}}}{\lambda|_{X \setminus \SSet{x}}} $ under $ \overline{i} $ in $ \FCP{\GG}{\lambda} $ is just
$$
\Cl{
   \Set{\func{\pi_{\u}^{\GG,\lambda}}{f}}
       {
       f \in \Gammc{\func{r_{\G}^{\ast}}{\A^{A,\Phi},p_{A,\Phi}}} ~ \text{and} ~
       \func{f}{\gamma} = 0_{\func{r_{\G}}{\gamma}} ~ \text{for all} ~ \gamma \in \G|_{\SSet{x}}
       }
   }{\FCP{\GG}{\lambda}}.
$$

As $ \GG $ is assumed separable, Renault's Disintegration Theorem says that $ \pi_{\u}^{\GG,\lambda} $ is unitarily equivalent to the integrated form of a covariant representation of $ \GG $. With this covariant representation, it can be seen for a fixed $ \varphi \in \Co{X} $ that
\begin{gather*}
\forall f \in \Gammc{\func{r_{\G}^{\ast}}{\A^{A,\Phi},p_{A,\Phi}}}: \\
     \Norm{\func{\pi_{\u}^{\GG,\lambda}}{\Br{\varphi \circ q \circ r_{\G}} \cdot f}}_{\FCP{\GG}{\lambda}}
\leq \Norm{\varphi}_{\infty} \Norm{\func{\pi_{\u}^{\GG,\lambda}}{f}}_{\FCP{\GG}{\lambda}}.
\end{gather*}
This yields a bounded linear operator $ \func{\Theta}{\varphi}: \FCP{\GG}{\lambda} \to \FCP{\GG}{\lambda} $ such that
\begin{gather*}
\forall \varphi \in \Co{X}, ~ \forall f \in \Gammc{\func{r_{\G}^{\ast}}{\A^{A,\Phi},p_{A,\Phi}}}: \\
\FUNC{\func{\Theta}{\varphi}}{\func{\pi_{\u}^{\GG,\lambda}}{f}} = \func{\pi_{\u}^{\GG,\lambda}}{\Br{\varphi \circ q \circ r_{\G}} \cdot f}.
\end{gather*}
Since $ \InvIm{q}{\SSet{x}} $ is invariant, $\func{\Theta}{\varphi} $ is a central multiplier on $ \FCP{\GG}{\lambda} $, and it can be proven that $ \Theta $ is a $ \ast $-homomorphism from $ \Co{X} $ to $ \Z{\M{\FCP{\GG}{\lambda}}} $.

To establish the non-degeneracy of $ \Theta $, let $ f \in \Gammc{\func{r_{\G}^{\ast}}{\A^{A,\Phi},p_{A,\Phi}}} $, $ \epsilon > 0 $, and
$$
K \df \Set{\gamma \in \G}{\Norm{\func{f}{\gamma}}_{\func{r_{\G}}{\gamma}} \geq \frac{\epsilon}{2 \Br{\Norm{f}_{I}^{\GG,\lambda} + 1}}},
$$
which is a compact subset of $ \G $. Pick $ \varphi \in \Cc{X} $ with range $ \CC{0}{1} $ such that $ \varphi \equiv 1 $ on $ \Im{q}{\Im{r_{\G}}{K}} $. Then $ \Norm{f - \Br{\varphi \circ q \circ r_{\G}} \cdot f}_{I}^{\GG,\lambda} < \epsilon $, which immediately yields
$$
\Norm{\func{\pi_{\u}^{\GG,\lambda}}{f} - \FUNC{\func{\Theta}{\varphi}}{\func{\pi_{\u}^{\GG,\lambda}}{f}}}_{\FCP{\GG}{\lambda}} < \epsilon.
$$
As $ f $ and $ \epsilon $ are arbitrary, the non-degeneracy of $ \Theta $ follows.

Finally, we must prove for any given $ x \in X $ that
$$
\Im{\overline{r}}{\FCP{\GG|_{X \setminus \SSet{x}}}{\lambda|_{X \setminus \SSet{x}}}} = \func{J}{\FCP{\GG}{\lambda},\Theta;x}.
$$
However, this follows as both are $ C^{\ast} $-subalgebras of $ \FCP{\GG}{\lambda} $ containing
$$
\Span{
     \Set{\FUNC{\func{\Theta}{\varphi}}{\func{\pi_{\u}^{\GG,\lambda}}{f}}}
         {\varphi \in I_{x} ~ \text{and} ~ f \in \Gammc{\func{r_{\G}^{\ast}}{\A^{A,\Phi},p_{A,\Phi}}}}
     }
$$
as a dense subset, where $ I_{x} \df \Set{\varphi \in \Co{X}}{\func{\varphi}{x} = 0} $. Therefore,
$$
      \Pair{\FCP{\GG}{\lambda}}{\Theta}_{x}
=     \Quot{\FCP{\GG}{\lambda}}{\func{J}{\FCP{\GG}{\lambda},\Theta;x}}
\cong \FCP{\GG|_{\SSet{x}}}{\lambda|_{\SSet{x}}}. \qedhere
$$
\end{proof}



\begin{Lem} \label{Reduced Equals Full}
Under the given assumption that
$$
\forall x \in X: \quad
\RCP{\GG|_{\SSet{x}}}{\lambda|_{\SSet{x}}} = \FCP{\GG|_{\SSet{x}}}{\lambda|_{\SSet{x}}},
$$
we have $ \RCP{\G}{\lambda} = \FCP{\G}{\lambda} $.
\end{Lem}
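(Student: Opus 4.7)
The plan is to show that the canonical surjection $ \FCP{\GG}{\lambda} \twoheadrightarrow \RCP{\GG}{\lambda} $ is isometric (hence an isomorphism) by proving
\[
  \Norm{\func{\pi_{\u}^{\GG,\lambda}}{f}}_{\FCP{\GG}{\lambda}}
= \Norm{f}_{\r}^{\GG,\lambda}
\]
for every $ f \in \Gammc{\func{r_{\G}^{\ast}}{\A^{A,\Phi},p^{A,\Phi}}} $. The strategy is to decompose each side as a supremum over $ x \in X $ of the analogous quantity for $ \GG|_{\SSet{x}} $, then invoke the fiberwise hypothesis.

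For the left-hand side, \cref{Fibering a Full Groupoid Crossed Product} provides a $ \Co{X} $-algebra structure $ \Theta $ on $ \FCP{\GG}{\lambda} $ whose fiber at $ x $ is $ \FCP{\GG|_{\SSet{x}}}{\lambda|_{\SSet{x}}} $, with the quotient onto the fiber induced by the section-restriction $ f \mapsto f|_{\G|_{\SSet{x}}} $ (via the extension $ \overline{r} $ constructed in that proof). Since the norm on any $ \Co{X} $-algebra equals the supremum of its fiber norms through the associated upper-semicontinuous $ C^{\ast} $-bundle, this yields
\[
  \Norm{\func{\pi_{\u}^{\GG,\lambda}}{f}}_{\FCP{\GG}{\lambda}}
= \sup_{x \in X} \Norm{\func{\pi_{\u}^{\GG|_{\SSet{x}},\lambda|_{\SSet{x}}}}{f|_{\G|_{\SSet{x}}}}}_{\FCP{\GG|_{\SSet{x}}}{\lambda|_{\SSet{x}}}}.
\]
For the right-hand side, the $ \G $-invariance assumption forces $ \G^{u} \subseteq \G|_{\SSet{x}} $ whenever $ u \in \InvIm{q}{\SSet{x}} $; indeed, $ \gamma \in \G^{u} $ gives $ \func{q}{\func{r_{\G}}{\gamma}} = \func{q}{u} = x $ and $ \func{q}{\func{s_{\G}}{\gamma}} = \func{q}{\func{r_{\G}}{\gamma}} = x $. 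Because the regular representation at $ u $ acts on a Hilbert $ \Pair{A}{\Phi}_{u} $-module built from $ \A^{A,\Phi} $-valued sections over $ \G^{u} $ integrated against $ \lambda^{u} $, it depends only on $ f|_{\G|_{\SSet{x}}} $ and coincides with the corresponding regular representation of the restricted system at $ u $. Partitioning $ \Unit{\G} = \bigsqcup_{x \in X} \InvIm{q}{\SSet{x}} $ and taking suprema gives $ \Norm{f}_{\r}^{\GG,\lambda} = \sup_{x \in X} \Norm{f|_{\G|_{\SSet{x}}}}_{\r}^{\GG|_{\SSet{x}},\lambda|_{\SSet{x}}} $.

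Combining the two decompositions with the hypothesis $ \RCP{\GG|_{\SSet{x}}}{\lambda|_{\SSet{x}}} = \FCP{\GG|_{\SSet{x}}}{\lambda|_{\SSet{x}}} $, which forces the full and reduced norms on $ \Gammc{\func{r_{\G}^{\ast}}{\A^{A,\Phi},p^{A,\Phi}} \big|_{\G|_{\SSet{x}}}} $ to agree for every $ x $, yields the desired equality. The main obstacle I anticipate is the first decomposition, namely verifying that the fiber map at $ x $ really sends $ \func{\pi_{\u}^{\GG,\lambda}}{f} $ to $ \func{\pi_{\u}^{\GG|_{\SSet{x}},\lambda|_{\SSet{x}}}}{f|_{\G|_{\SSet{x}}}} $. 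This amounts to identifying the abstract quotient by $ \func{J}{\FCP{\GG}{\lambda},\Theta;x} $ with the restriction-induced map $ \overline{r} $ from the Goehle exact sequence invoked in the proof of \cref{Fibering a Full Groupoid Crossed Product}, essentially routine bookkeeping on the dense subspace $ \Gammc{\func{r_{\G}^{\ast}}{\A^{A,\Phi},p^{A,\Phi}}} $.
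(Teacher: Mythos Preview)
Your argument is correct but takes a genuinely different route from the paper's. The paper follows Rieffel's classical strategy from \cite[Theorem 3.5]{Rieffel1}: it shows that every irreducible representation $\Pair{\sigma}{\mathcal{H}}$ of $\FCP{\GG}{\lambda}$ factors through $\RCP{\GG}{\lambda}$. By irreducibility, the extension $\tilde{\sigma}$ sends the central copy of $\Co{X}$ (via $\Theta$) into $\C \cdot \id_{\mathcal{H}}$, hence is evaluation at some $x \in X$; this forces $\func{J}{\FCP{\GG}{\lambda},\Theta;x} \subseteq \ker{\sigma}$, so $\sigma$ descends to $\FCP{\GG|_{\SSet{x}}}{\lambda|_{\SSet{x}}} = \RCP{\GG|_{\SSet{x}}}{\lambda|_{\SSet{x}}}$, and the latter is already known to be a quotient of $\RCP{\GG}{\lambda}$.

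Your approach instead expresses both the full and the reduced norms on $\Gammc{\func{r_{\G}^{\ast}}{\A^{A,\Phi},p^{A,\Phi}}}$ as suprema over $x \in X$ of the corresponding fiber norms, and then invokes the fiberwise hypothesis. The full-norm decomposition is immediate from the $\Co{X}$-algebra structure in \cref{Fibering a Full Groupoid Crossed Product}; the reduced-norm decomposition comes from your observation that $\G^{u} = \Br{\G|_{\SSet{\func{q}{u}}}}^{u}$, so the regular representation of $\GG$ at $u$ is literally the regular representation of $\GG|_{\SSet{\func{q}{u}}}$ at $u$. This avoids the detour through irreducible representations and the passage to $\M{\FCP{\GG}{\lambda}}$, and is arguably more elementary. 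The paper's route, on the other hand, needs only the single fact that $\RCP{\GG|_{\SSet{x}}}{\lambda|_{\SSet{x}}}$ is a \emph{quotient} of $\RCP{\GG}{\lambda}$ rather than the pointwise identification of regular representations, which makes it a bit more portable to settings where the reduced norm is defined abstractly and the sup-over-fibers description is less transparent.
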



\begin{proof}
We follow the strategy of the second half of the proof of \cite[Theorem 3.5]{Rieffel1}. It suffices to see that irreducible representations of $ \FCP{\GG}{\lambda} $ factor through $ \RCP{\G}{\lambda} $. For once this is established, if the canonical quotient map from $ \FCP{\GG}{\lambda} $ onto $ \RCP{\GG}{\lambda} $ were not injective, then by picking a non-zero element $ b $ of $ \FCP{\GG}{\lambda} $ in the kernel of this quotient map and an irreducible representation $ \Pair{\sigma}{\mathcal{H}} $ of $ \FCP{\GG}{\lambda} $ such that
$$
     \Norm{\func{\sigma}{b}}_{\B{\mathcal{H}}}
=    \Norm{b}_{\FCP{\GG}{\lambda}}
\neq 0,
$$
but then $ \Pair{\sigma}{\mathcal{H}} $ cannot factor through $ \RCP{\GG}{\lambda} $ --- a contradiction.

It thus remains to prove that irreducible representations of $ \FCP{\GG}{\lambda} $ factor through $ \RCP{\G}{\lambda} $. Let $ \Pair{\sigma}{\mathcal{H}} $ be an irreducible representation of $ \FCP{\GG}{\lambda} $. We can uniquely extend $ \Pair{\sigma}{\mathcal{H}} $ to an irreducible representation $ \Pair{\tilde{\sigma}}{\mathcal{H}} $ of $ \M{\FCP{\GG}{\lambda}} $. By irreducibility,
$$
          \Im{\tilde{\sigma}}{\Z{\M{\FCP{\GG}{\lambda}}}}
\subseteq \Im{\tilde{\sigma}}{\M{\FCP{\GG}{\lambda}}}'
\subseteq \C \cdot \id_{\mathcal{H}},
$$
so the range of $ \tilde{\sigma} \circ \Theta $ is contained in $ \C \cdot \id_{\mathcal{H}} $, where $ \Theta $ is from \cref{Fibering a Full Groupoid Crossed Product}. Hence, there is an $ x \in X $ such that $ \Func{\tilde{\sigma} \circ \Theta}{\varphi} = \func{\varphi}{x} \cdot \id_{\mathcal{H}} $ for all $ \varphi \in \Co{X} $, so
\begin{align*}
\forall \varphi \in I_{x}, ~ \forall b \in \FCP{\GG}{\lambda}: \quad
    \func{\sigma}{\FUNC{\func{\Theta}{\varphi}}{b}}
& = \func{\tilde{\sigma}}{\func{\Theta}{\varphi}} \circ \func{\sigma}{b} \\
& = \SqBr{\func{\varphi}{x} \cdot \id_{\mathcal{H}}} \circ \func{\sigma}{b} \\
& = 0_{\B{\mathcal{H}}},
\end{align*}
which yields $ \func{J}{\FCP{\GG}{\lambda},\Theta;x} \subseteq \ker{\sigma} $. We thus have a representation $ \Pair{\dot{\sigma}}{\mathcal{H}} $ of
$$
\Quot{\FCP{\GG}{\lambda}}{\func{J}{\FCP{\GG}{\lambda},\Theta;x}} \cong \FCP{\GG|_{\SSet{x}}}{\lambda|_{\SSet{x}}}
$$
that satisfies
$$
\forall b \in \FCP{\GG}{\lambda}: \quad
\func{\dot{\sigma}}{b + \func{J}{\FCP{\GG}{\lambda},\Theta;x}} = \func{\sigma}{b}.
$$
However, $ \RCP{\GG|_{\SSet{x}}}{\lambda|_{\SSet{x}}} = \FCP{\GG|_{\SSet{x}}}{\lambda|_{\SSet{x}}} $; as $ \RCP{\GG|_{\SSet{x}}}{\lambda|_{\SSet{x}}} $ is known to be a quotient of $ \RCP{\GG}{\lambda} $, we get a representation $ \Pair{\tau}{\mathcal{H}} $ of $ \RCP{\GG}{\lambda} $ such that
$$
\forall f \in \Gammc{\func{r_{\G}^{\ast}}{\A^{A,\Phi},p_{A,\Phi}}}: \quad
\func{\tau}{\func{\pi_{\r}^{\GG,\lambda}}{f}} = \func{\sigma}{\func{\pi_{\u}^{\GG,\lambda}}{f}}.
$$
Therefore, $ \Pair{\sigma}{\mathcal{H}} $ factors through $ \RCP{\GG}{\lambda} $.
\end{proof}



\begin{Lem} \label{A Particular *-Isomorphism}
For each $ x \in X $, there is a $ \ast $-isomorphism
$$
\tilde{r}: \Quot{\E{\GG}{\lambda}{A_{0}}}{\func{J}{\E{\GG}{\lambda}{A_{0}},\Theta_{\E{\GG}{\lambda}{A_{0}}};x}} \to \E{\GG|_{\SSet{x}}}{\lambda|_{\SSet{x}}}{A_{0}|_{\SSet{x}}}
$$
such that
$$
\forall b \in \E{\GG}{\lambda}{A_{0}}: \quad
\func{\tilde{r}}{b + \func{J}{\E{\GG}{\lambda}{A_{0}},\Theta_{\E{\GG}{\lambda}{A_{0}}};x}} = \func{\overline{r}}{b},
$$
where $ \overline{r} $ denotes the surjective $ \ast $-homomorphism in the short exact sequence in the proof of \cref{Fibering a Full Groupoid Crossed Product}.
\end{Lem}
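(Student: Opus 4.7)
The plan is to produce $\tilde{r}$ as the map induced on the quotient by the restriction of $\overline{r}$ to $\E{\GG}{\lambda}{A_{0}}$, where $\E{\GG}{\lambda}{A_{0}}$ is viewed as an ideal of $\FCP{\GG}{\lambda} = \RCP{\GG}{\lambda}$ via \cref{Reduced Equals Full}. To invoke the First Isomorphism Theorem, I must check that \emph{(i)} the image of $\overline{r}|_{\E{\GG}{\lambda}{A_{0}}}$ is exactly $\E{\GG|_{\SSet{x}}}{\lambda|_{\SSet{x}}}{A_{0}|_{\SSet{x}}}$, and \emph{(ii)} its kernel equals $\func{J}{\E{\GG}{\lambda}{A_{0}},\Theta_{\E{\GG}{\lambda}{A_{0}}};x}$. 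Note that \cref{An Ideal of a C0(X)-Algebra Is a C0(X)-Algebra} makes $\Pair{\E{\GG}{\lambda}{A_{0}}}{\Theta_{\E{\GG}{\lambda}{A_{0}}}}$ into a $\Co{X}$-algebra, so (ii) is sensible.

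For (i), the key identity is
$$
\func{\overline{r}}{\Inner{a}{b}{\PGDS{\GG}{\lambda}{A_{0}}}^{E}} = \Inner{a|_{\SSet{x}}}{b|_{\SSet{x}}}{\PGDS{\GG|_{\SSet{x}}}{\lambda|_{\SSet{x}}}{A_{0}|_{\SSet{x}}}}^{E}
$$
for all $ a,b \in A_{0} $. Since $ \overline{r} $ is continuous and at the convolution-algebra level acts by restriction of compactly supported sections from $ \G $ to $ \G|_{\SSet{x}} $, it is enough to observe that for any net $ \Seq{\varphi_{i}}{i \in I} $ in $ \Cc{\G,\CC{0}{1}} $ converging uniformly to $ 1 $ on compact subsets of $ \G $, the restricted net $ \Seq{\varphi_{i}|_{\G|_{\SSet{x}}}}{i \in I} $ lies in $ \Cc{\G|_{\SSet{x}},\CC{0}{1}} $ and converges uniformly to $ 1 $ on compact subsets of $ \G|_{\SSet{x}} $ (which is closed in $ \G $). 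Combined with the density of $ A_{0}|_{\SSet{x}} $ in $ A|_{\SSet{x}} $ already noted in the text, this identifies the closed image of $ \overline{r}|_{\E{\GG}{\lambda}{A_{0}}} $ with $ \E{\GG|_{\SSet{x}}}{\lambda|_{\SSet{x}}}{A_{0}|_{\SSet{x}}} $.

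For (ii), I will establish the general fact that whenever $ E $ is an ideal of a $ \Co{X} $-algebra $ \Pair{A}{\Phi} $, one has $ E \cap \func{J}{A,\Phi;x} = \func{J}{E,\Phi_{E};x} $. The inclusion $ \supseteq $ is immediate. For the reverse, take $ y \in E \cap \func{J}{A,\Phi;x} $ and approximate $ y $ in norm by finite sums $ \sum_{j} \FUNC{\func{\Phi}{\varphi_{j}}}{a_{j}} $ with $ \varphi_{j} \in I_{x} $ and $ a_{j} \in A $. For an approximate identity $ \Seq{e_{k}}{k \in K} $ of $ E $, centrality of $ \func{\Phi}{\varphi_{j}} \in \Z{\M{A}} $ gives $ e_{k} y $ as a norm limit of sums $ \sum_{j} \FUNC{\func{\Phi}{\varphi_{j}}}{e_{k} a_{j}} $, and each $ e_{k} a_{j} $ lies in $ E $; hence $ e_{k} y \in \func{J}{E,\Phi_{E};x} $, and letting $ k $ vary yields $ y \in \func{J}{E,\Phi_{E};x} $. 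Applying this with $ A = \FCP{\GG}{\lambda} $, $ \Phi = \Theta $, $ E = \E{\GG}{\lambda}{A_{0}} $, and using $ \ker{\overline{r}} = \func{J}{\FCP{\GG}{\lambda},\Theta;x} $ from \cref{Fibering a Full Groupoid Crossed Product}, delivers the kernel description.

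I expect (i) to be the main obstacle: one has to navigate between the reduced and full crossed products both on $ \G $ and on $ \G|_{\SSet{x}} $ (supplied by \cref{Reduced Equals Full}) in order to read $ \overline{r} $ as the canonical quotient at the reduced level, and then to verify that $ \overline{r} $ pushes the Cauchy net defining $ \Inner{a}{b}{\PGDS{\GG}{\lambda}{A_{0}}}^{E} $ to the analogous defining net for $ \PGDS{\GG|_{\SSet{x}}}{\lambda|_{\SSet{x}}}{A_{0}|_{\SSet{x}}} $; once this is in hand, (ii) and the First Isomorphism Theorem produce $ \tilde{r} $ with the advertised formula.
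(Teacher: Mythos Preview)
Your proposal is correct and follows essentially the same route as the paper: restrict $\overline{r}$ to $\E{\GG}{\lambda}{A_{0}}$, identify the image as $\E{\GG|_{\SSet{x}}}{\lambda|_{\SSet{x}}}{A_{0}|_{\SSet{x}}}$ via the identity $\func{\overline{r}}{\Inner{a}{b}{}^{E}} = \Inner{a|_{\SSet{x}}}{b|_{\SSet{x}}}{}^{E}$ and closedness of $C^{\ast}$-images, and show the kernel is $\func{J}{\E{\GG}{\lambda}{A_{0}},\Theta_{\E{\GG}{\lambda}{A_{0}}};x}$. The only cosmetic difference is that for the nontrivial inclusion in (ii) the paper invokes non-degeneracy of the $I_{x}$-action on $\func{J}{\FCP{\GG}{\lambda},\Theta;x}$ (i.e., uses an approximate identity in $I_{x}$), whereas you use an approximate identity in the ideal $E$; both arguments yield $E \cap \func{J}{A,\Phi;x} = \func{J}{E,\Phi_{E};x}$.
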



\begin{proof}
As $ C^{\ast} $-algebraic homomorphisms have closed ranges, the restriction of $ \overline{r}|_{\E{\GG}{\lambda}{A_{0}}} $ surjects onto $ \E{\GG|_{\SSet{x}}}{\lambda|_{\SSet{x}}}{A_{0}|_{\SSet{x}}} $, so there is a $ \ast $-isomorphism
$$
\tilde{r}:
\Quot{\E{\GG}{\lambda}{A_{0}}}{\ker{\overline{r}|_{\E{\GG}{\lambda}{A_{0}}}}} \to \E{\GG|_{\SSet{x}}}{\lambda|_{\SSet{x}}}{A_{0}|_{\SSet{x}}}
$$
such that
$$
\forall b \in \E{\GG}{\lambda}{A_{0}}: \quad
\func{\tilde{r}}{b + \ker{\overline{r}|_{\E{\GG}{\lambda}{A_{0}}}}} = \func{\overline{r}}{b}.
$$
It now remains to show that
$$
\ker{\overline{r}|_{\E{\GG}{\lambda}{A_{0}}}} = \func{J}{\E{\GG}{\lambda}{A_{0}},\Theta_{\E{\GG}{\lambda}{A_{0}}};x}.
$$
However,
$$
  \ker{\overline{r}|_{\E{\GG}{\lambda}{A_{0}}}}
= \E{\GG}{\lambda}{A_{0}} \cap \ker{\overline{r}}
= \E{\GG}{\lambda}{A_{0}} \cap \func{J}{\FCP{\GG}{\lambda},\Theta;x},
$$
so we already have
$$
\func{J}{\E{\GG}{\lambda}{A_{0}},\Theta_{\E{\GG}{\lambda}{A_{0}}};x} \subseteq \ker{\overline{r}|_{\E{\GG}{\lambda}{A_{0}}}}.
$$
To obtain the reverse inclusion, note that the action of $ I_{x} $ on $ \func{J}{\FCP{\GG}{\lambda},\Theta;x} $ via $ \Theta $ is non-degenerate.
\end{proof}


We are now ready to prove \cref{Main Theorem}.



\begin{proof}[{Proof of \cref{Main Theorem}}]
As $ \RCP{\GG}{\lambda} = \FCP{\GG}{\lambda} $ by \cref{Reduced Equals Full}, $ \E{\GG}{\lambda}{A_{0}} $ is an ideal of $ \FCP{\GG}{\lambda} $. Now, \cref{Fibering a Full Groupoid Crossed Product} says that $ \Pair{\FCP{\GG}{\lambda}}{\Theta} $ is a $ \Co{X} $-algebra, so $ \Pair{\E{\GG}{\lambda}{A_{0}}}{\Theta_{\E{\GG}{\lambda}{A_{0}}}} $ is also a $ \Co{X} $-algebra by \cref{An Ideal of a C0(X)-Algebra Is a C0(X)-Algebra}, but $ \Fix{\GG}{\lambda}{A_{0}} $ and $ \E{\GG}{\lambda}{A_{0}} $ are Morita equivalent, so \cref{Transferring C0(X)-Algebra Structures Across Imprimitivity Bimodules} gives us the desired outcome.

To finish Part \eqref{Main Proposition - Part 1}, it remains to establish the formula for $ \Upsilon $:
$$
  \FUNC{\func{\Upsilon}{\varphi}}{\sum_{j = 1}^{n} \Inner{a_{j}}{b_{j}}{\PGDS{\GG}{\lambda}{A_{0}}}^{D}}
= \sum_{j = 1}^{n} \Inner{\FUNC{\func{\Psi}{\overline{\varphi}}}{a_{j}}}{b_{j}}{\PGDS{\GG}{\lambda}{A_{0}}}^{D}.
$$
Throughout, let $ c_{1},\ldots,c_{n} \in A_{0} $ and $ f_{1},\ldots,f_{n} \in \Eo{\GG}{\lambda}{A_{0}} $. By \cref{Transferring C0(X)-Algebra Structures Across Imprimitivity Bimodules},
$$
  \FUNC{\func{\Upsilon}{\varphi}}{\sum_{j = 1}^{n} \Inner{a_{j}}{c_{j} \circledast_{\PGDS{\GG}{\lambda}{A_{0}}} f_{j}}{\PGDS{\GG}{\lambda}{A_{0}}}^{D}}
= \sum_{j = 1}^{n} \Inner{a_{j}}{c_{j} \circledast_{\PGDS{\GG}{\lambda}{A_{0}}} \FUNC{\func{\Theta}{\varphi}}{f_{j}}}{\PGDS{\GG}{\lambda}{A_{0}}}^{D}.
$$
It follows for all $ d \in A_{0} $ and $ u \in \Unit{\G} $ that
\begin{align*}
  & ~ \FUNC{\FUNC{\FUNC{\func{\Upsilon}{\varphi}}{\sum_{j = 1}^{n} \Inner{a_{j}}{c_{j} \circledast_{\PGDS{\GG}{\lambda}{A_{0}}} f_{j}}{\PGDS{\GG}{\lambda}{A_{0}}}^{D}}}{d}}
           {u} \\
= & ~ \FUNC{\sum_{j = 1}^{n} \func{\Inner{a_{j}}{c_{j} \circledast_{\PGDS{\GG}{\lambda}{A_{0}}} \FUNC{\func{\Theta}{\varphi}}{f_{j}}}{\PGDS{\GG}{\lambda}{A_{0}}}^{D}}{d}}
           {u} \\
= & ~ \sum_{j = 1}^{n}
      \Int{\G^{u}}
          {
          \alphArg{\gamma}{\FUNC{a_{j} \Br{c_{j} \circledast_{\PGDS{\GG}{\lambda}{A_{0}}} \FUNC{\func{\Theta}{\varphi}}{f_{j}}}^{\ast}}{\func{s_{\G}}{\gamma}}}
          \func{d}{\func{r_{\G}}{\gamma}}
          }
          {\lambda^{u}}
          {\gamma} \\
= & ~ \sum_{j = 1}^{n} \\
  & ~ \Int{\G^{u}}
          {
          \alphArg{\gamma}
                  {
                  \func{a_{j}}{\func{s_{\G}}{\gamma}}
                  \SqBr{
                       \func{\overline{\varphi}}{\func{q}{\func{s_{\G}}{\gamma}}} \cdot
                       \func{\Br{c_{j} \circledast_{\PGDS{\GG}{\lambda}{A_{0}}} f_{j}}^{\ast}}{\func{s_{\G}}{\gamma}}
                       }
                  }
          \func{d}{\func{r_{\G}}{\gamma}}
          }
          {\lambda^{u}}
          {\gamma} \\
= & ~ \sum_{j = 1}^{n}
      \Int{\G^{u}}
          {
          \alphArg{\gamma}
                  {
                  \func{\overline{\varphi}}{\func{q}{\func{s_{\G}}{\gamma}}} \cdot \func{a_{j}}{\func{s_{\G}}{\gamma}}
                  \func{\Br{c_{j} \circledast_{\PGDS{\GG}{\lambda}{A_{0}}} f_{j}}^{\ast}}{\func{s_{\G}}{\gamma}}
                  }
          \func{d}{\func{r_{\G}}{\gamma}}
          }
          {\lambda^{u}}
          {\gamma} \\
= & ~ \FUNC{
           \Func{\sum_{j = 1}^{n}
           \Inner{\FUNC{\func{\Psi}{\overline{\varphi}}}{a_{j}}}{c_{j} \circledast_{\PGDS{\GG}{\lambda}{A_{0}}} f_{j}}{\PGDS{\GG}{\lambda}{A_{0}}}^{D}}{d}
           }
           {u}.
\end{align*}
As $ \Span{A_{0} \circledast_{\PGDS{\GG}{\lambda}{A_{0}}} \Eo{\GG}{\lambda}{A_{0}}} $ is $ \Norm{\cdot}_{\Xo{\GG}{\lambda}{A_{0}}} $-dense in $ A_{0} $, the proof of Part \eqref{Main Proposition - Part 1} is complete

For Part \eqref{Main Proposition - Part 2}, fix $ x \in X $. We have seen that $ \PGDS{\GG|_{\SSet{x}}}{\lambda|_{\SSet{x}}}{A_{0}|_{\SSet{x}}} $ is a proper groupoid dynamical system. We must therefore show that there is a bounded linear operator
$$
T: \X{\GG}{\lambda}{A_{0}} \to \X{\GG|_{\SSet{x}}}{\lambda|_{\SSet{x}}}{A_{0}|_{\SSet{x}}}
$$
such that $ \func{T}{a} = a|_{\SSet{x}} $ for all $ a \in A_{0} $.

Recall the $ \ast $-homomorphism $ \overline{r} $ in the proof in \cref{Fibering a Full Groupoid Crossed Product}. Observe that
$$
\forall a,b \in A_{0}: \quad
  \func{\overline{r}}{\Inner{a}{b}{\PGDS{\GG}{\lambda}{A_{0}}}^{E}}
= \Inner{a|_{\SSet{x}}}{b|_{\SSet{x}}}{\PGDS{\GG|_{\SSet{x}}}{\lambda|_{\SSet{x}}}{A_{0}|_{\SSet{x}}}}^{E},
$$
which yields
$$
\forall a \in A_{0}: \qquad
\Norm{a|_{\SSet{x}}}_{\X{\GG|_{\SSet{x}}}{\lambda|_{\SSet{x}}}{A_{0}|_{\SSet{x}}}} \leq \Norm{a}_{\X{\GG}{\lambda}{A_{0}}}.
$$
As such, $ \Map{A_{0}}{A_{0}|_{\SSet{x}}}{a}{a|_{\SSet{x}}} $ extends to a bounded linear operator
$$
T: \X{\GG}{\lambda}{A_{0}} \to \X{\GG|_{\SSet{x}}}{\lambda|_{\SSet{x}}}{A_{0}|_{\SSet{x}}}.
$$

Now, notice that
$$
\forall a \in A_{0}, ~ \forall \varphi \in I_{x}, ~ \forall f \in \Eo{\GG}{\lambda}{A_{0}}: \quad
\func{T}{a \circledast_{\PGDS{\GG}{\lambda}{A_{0}}} \FUNC{\func{\Theta}{\varphi}}{f}} = 0_{A|_{\SSet{x}}}.
$$
Letting $ \X{\GG}{\lambda}{A_{0}}_{x} $ denote
$$\Cl{
     \Span{
          \X{\GG}{\lambda}{A_{0}} \mathbin{\overline{\circledast}}_{\PGDS{\GG}{\lambda}{A_{0}}}
          \func{J}{\E{\GG}{\lambda}{A_{0}},\Theta_{\E{\GG}{\lambda}{A_{0}}};x}
          }
     }{\X{\GG}{\lambda}{A_{0}}},
$$
where
$$
\overline{\circledast}_{\PGDS{\GG}{\lambda}{A_{0}}}: \X{\GG}{\lambda}{A_{0}} \times \E{\GG}{\lambda}{A_{0}} \to \X{\GG}{\lambda}{A_{0}}
$$
extends $ \circledast_{\PGDS{\GG}{\lambda}{A_{0}}} $. By continuity we have that
$$
\Im{T}{\X{\GG}{\lambda}{A_{0}}_{x}} = \SSet{0_{\X{\GG|_{\SSet{x}}}{\lambda|_{\SSet{x}}}{A_{0}|_{\SSet{x}}}}}.
$$
Therefore, there is  a bounded linear operator
$$
S: \Quot{\X{\GG}{\lambda}{A_{0}}}{\X{\GG}{\lambda}{A_{0}}_{x}} \to \X{\GG|_{\SSet{x}}}{\lambda|_{\SSet{x}}}{A_{0}|_{\SSet{x}}}
$$
such that
$$
\forall \zeta \in \X{\GG}{\lambda}{A_{0}}: \quad
\func{S}{\zeta + \X{\GG}{\lambda}{A_{0}}_{x}} = \func{T}{\zeta}.
$$
As $ \X{\GG}{\lambda}{A_{0}}_{x} $ is a Hilbert $ \func{J}{\E{\GG}{\lambda}{A_{0}},\Theta_{\E{\GG}{\lambda}{A_{0}}};x} $-module, the quotient $ \Quot{\X{\GG}{\lambda}{A_{0}}}{\X{\GG}{\lambda}{A_{0}}_{x}} $ is a Hilbert $ \Quot{\E{\GG}{\lambda}{A_{0}}}{\func{J}{\E{\GG}{\lambda}{A_{0}},\Theta_{\E{\GG}{\lambda}{A_{0}}};x}} $-module by \cite[Proposition 3.25]{Raeburn/Williams}. By \cref{A Particular *-Isomorphism}, we have the $ \ast $-isomorphism
$$
\tilde{r}:
\Quot{\E{\GG}{\lambda}{A_{0}}}{\func{J}{\E{\GG}{\lambda}{A_{0}},\Theta_{\E{\GG}{\lambda}{A_{0}}};x}} \to \E{\GG|_{\SSet{x}}}{\lambda|_{\SSet{x}}}{A_{0}|_{\SSet{x}}},
$$
so we can ask if $ S $ is a (unitary) isomorphism of Hilbert $ C^{\ast} $-modules.

Firstly, as
\begin{align*}
\forall a,b \in A_{0}: \quad
 {}& \func{\tilde{r}}{\Inner{a + \X{\GG}{\lambda}{A_{0}}_{x}}{b + \X{\GG}{\lambda}{A_{0}}_{x}}{\Quot{\X{\GG}{\lambda}{A_{0}}}{\X{\GG}{\lambda}{A_{0}}_{x}}}} \\
={}& \func{\tilde{r}}{\Inner{a}{b}{\PGDS{\GG}{\lambda}{A_{0}}}^{E} + \func{J}{\E{\GG}{\lambda}{A_{0}},\Theta_{\E{\GG}{\lambda}{A_{0}}};x}} \\
={}& \func{\overline{r}}{\Inner{a}{b}{\PGDS{\GG}{\lambda}{A_{0}}}^{E}} \\
={}& \Inner{a|_{\SSet{x}}}{b|_{\SSet{x}}}{\PGDS{\GG|_{\SSet{x}}}{\lambda|_{\SSet{x}}}{A_{0}|_{\SSet{x}}}}^{E} \\
={}& \Inner{a|_{\SSet{x}}}{b|_{\SSet{x}}}{\X{\GG|_{\SSet{x}}}{\lambda|_{\SSet{x}}}{A_{0}|_{\SSet{x}}}},
\end{align*}
we find that $ S $ is an isometry.  Secondly, $ S $ respects the $ C^{\ast} $-algebraic actions.

Lastly, the image of $ S $ is dense in $ \X{\GG|_{\SSet{x}}}{\lambda|_{\SSet{x}}}{A_{0}|_{\SSet{x}}} $, so as $ S $ is a Banach-space isometry, this image is all of $ \X{\GG|_{\SSet{x}}}{\lambda|_{\SSet{x}}}{A_{0}|_{\SSet{x}}} $. Therefore, $ S $ is indeed a (unitary) isomorphism of Hilbert $ C^{\ast} $-modules.

For any imprimitivity $ \Pair{B}{C} $-bimodule $ \Y $, it is known that the $ C^{\ast} $-algebra $ \K{\Y} $ of compact adjointable operators on $ \Y $ is $ \ast $-isomorphic to $ B $ (\cite[Proposition 3.8]{Raeburn/Williams}). Using this fact, we obtain
\begin{align*}
     {}& \Fix{\GG|_{\SSet{x}}}{\lambda|_{\SSet{x}}}{A_{0}|_{\SSet{x}}} \\
\cong{}& \K{\X{\GG|_{\SSet{x}}}{\lambda|_{\SSet{x}}}{A_{0}|_{\SSet{x}}}} \\
\cong{}& \K{\Quot{\X{\GG}{\lambda}{A_{0}}}{\X{\GG}{\lambda}{A_{0}}_{x}}} \qquad
          \Br{\text{As $ S $ is an isomorphism.}} \\
\cong{}& \Quot{\Fix{\GG}{\lambda}{A_{0}}}{\func{J}{\Fix{\GG}{\lambda}{A_{0}},\Upsilon;x}},
\end{align*}
where the last $ \ast $-isomorphism also follows from \cite[Proposition 3.25]{Raeburn/Williams}.
\end{proof}



\section{Examples}



\begin{Eg}
We will explain how to recover \cite[Theorem~3.2]{Rieffel2}. In line with the assumptions in this result, let $ G $ be a second-countable locally compact Hausdorff group, and $ X $ a second-countable locally compact Hausdorff space. Assume that $ \Pair{A}{\Phi} $ is a $ \Co{X} $-algebra and that $ \Seq{\alpha_{x}}{x \in X} $ is what Rieffel calls a \emph{continuous field of actions} of $ G $ on $ \Pair{A}{\Phi} $, i.e.,
\begin{itemize}
\item
$ \alpha_{x} $ is a $ \ast $-automorphism of $ \Pair{A}{\Phi}_{x} $ for each $ x \in X $, and

\item
$ \Map{X \times G}{\A^{A,\Phi}}{\Pair{x}{r}}{\alphArg{x,r}{\func{a}{x}}} $ is a continuous map for each $ a \in A $.
\end{itemize}
Assume a dense $ \ast $-subalgebra $ A_{0} $ of $ A $ exists such that $ \PCDS{G}{A|_{\SSet{x}}}{\alpha|_{\SSet{x}}}{A_{0}|_{\SSet{x}}} $ is a proper $ C^{\ast} $-dynamical system. Then consider the groupoid $ \G = X \times G $ with the following properties:
\begin{enumerate}
\item
$ \Unit{\G} = X \times \SSet{e} $.

\item
$ \func{s_{\G}}{x,r} = \Pair{x}{e} = \func{r_{\G}}{x,r} $ for all $ r \in G $ and $ x \in X $.

\item
The groupoid operations are defined by
$$
\forall r,s \in G, ~ \forall x \in X: \quad
\Pair{x}{r} \Pair{x}{s} \df \Pair{x}{r s}
\quad \text{and} \quad
\Pair{x}{r}^{- 1} = \Pair{x}{r^{- 1}}.
$$
\end{enumerate}
Define $ q: \Unit{\G} \to X $ by $ \func{q}{x,e} \df x $ for all $ x \in X $. Pick a Haar measure $ \mu $ on $ G $, and give $ \G $ the Haar system $ \lambda $ that imposes $ \mu $ on $ \G^{\Pair{x}{e}} = \SSet{x} \times G $ for each $ x \in X $. Then $ \Pair{A}{\Phi} $ satisfies our standing assumptions, is a $ \Co{\Unit{\G}} $-algebra, and
$$
\forall x \in X: \quad
\Pair{\Fix{\GG}{\lambda}{A_{0}}}{\Upsilon}_{x} = \Fix{\GG|_{\SSet{x}}}{\lambda|_{\SSet{x}}}{A_{0}|_{\SSet{x}}}.
$$
Let $ x \in X $, and observe the following:
\begin{itemize}
\item
$ \G|_{\SSet{x}} = \SSet{x} \times G \cong G $, so $ \G|_{\SSet{x}} $ reduces to a group.

\item
$ A|_{\SSet{x}} \cong \Pair{A}{\Phi}_{x} $.

\item
$ \alpha|_{\SSet{x}} \in \Aut{\Pair{A}{\Phi}_{x}} $.
\end{itemize}
One can check that
$$
          \Fix{\GG|_{\SSet{x}}}{\lambda|_{\SSet{x}}}{A_{0}|_{\SSet{x}}}
=         \FixR{\G|_{\SSet{x}}}{A|_{\SSet{x}}}{\alpha|_{\SSet{x}}}{A_{0}|_{\SSet{x}}}
\subseteq \M{A|_{\SSet{x}}}.
$$
Next, define an action $ \beta $ of $ G $ on $ A $ by identifying $ \func{\beta_{r}}{a} $ with $ \Seq{\alphArg{x,r}{\func{a}{x}}}{x \in X} $, for all $ r \in G $ and $ a \in A $. Then
$
\Fix{\GG}{\lambda}{A_{0}} = \FixR{G}{A}{\beta}{A_{0}}
$
because they are the same $ C^{\ast} $-subalgebra of $ \M{A} $ --- according to \cref{Proper Groupoid Dynamical System},
\begin{equation} \label{Example 1}
\forall a,b,c \in A_{0}: \quad
  \FUNC{\func{\Inner{a}{b}{\PGDS{\GG}{\lambda}{A_{0}}}^{D}}{c}}{x}
= \Int{G}{\alphArg{x,r}{\Func{a b^{\ast}}{x}} \func{c}{x}}{\mu}{r},
\end{equation}
while according to \cite{Rieffel2}, Rieffel's $ D $-inner product takes the form
$$
\forall a,b,c \in A_{0}: \quad
  \func{\Inner{a}{b}{\Br{G,A,\beta;A_{0}}}^{D,\operatorname{Rieffel}}}{c}
= \Int{G}{\func{\beta_{r}}{a b^{\ast}} c}{\mu}{r};
$$
however, the evaluation $ \ast $-homomorphism from $ A $ to $ \Pair{A}{\Phi}_{x} $ is continuous, so
\begin{align*}
\forall a,b,c \in A_{0}: \quad
    \FUNC{\func{\Inner{a}{b}{\Br{G,A,\beta;A_{0}}}^{D,\operatorname{Rieffel}}}{c}}{x}
& = \FUNC{\Int{G}{\func{\beta_{r}}{a b^{\ast}} c}{\mu}{r}}{x} \\
& = \Int{G}{\alphArg{x,r}{\Func{a b^{\ast}}{x}} \func{c}{x}}{\mu}{r},
\end{align*}
which is precisely the right-hand side of \eqref{Example 1}. Rieffel's result, then, is that
$$
\Pair{\FixR{G}{A}{\beta}{A_{0}}}{\Upsilon}_{x} \cong \FixR{\G|_{\SSet{x}}}{A|_{\SSet{x}}}{\alpha|_{\SSet{x}}}{A_{0}|_{\SSet{x}}},
$$
but this is just a consequence of our framework.
\end{Eg}



\begin{Eg} \label{Example 2}
Let $ \G $ be a second-countable and locally compact Hausdorff groupoid that acts freely and properly on $ \Unit{\G} $ (for the definition of a proper groupoid action, see \cite{Anantharaman-Delaroche/Renault}), and let $ \lambda $ be a Haar system on $ \G $. Fix the following objects:
\begin{enumerate}
\item
$ X \df \Orbit{\G}{\Unit{\G}} $ --- the orbit space of $ \Unit{\G} $ under the left action of $ \G $.

\item
$ q: \Unit{\G} \to X $ --- the cooresponding quotient map.

\item
$ \GG \df \Quad{\G}{\Co{\Unit{\G}}}{\Phi}{\lt} $, where
\begin{itemize}
\item
$ \Phi: \Co{\Unit{\G}} \to \M{\Co{\Unit{\G}}} $ is the left-multiplication action, and

\item
$ \lt $ denotes the left-translation action, i.e.,
$$
\func{\lt_{\gamma}}{f + \func{J}{\Co{\Unit{\G}},\Phi;\func{s_{\G}}{\gamma}}} \df g + \func{J}{\Co{\Unit{\G}},\Phi;\func{r_{\G}}{\gamma}}
$$
for any $ \gamma \in \G $ and $ f \in \Co{\Unit{\G}} $, where $ g \in \Co{\Unit{\G}} $ satisfies
$$
  \func{g}{\func{r_{\G}}{\gamma}}
= \func{f}{\gamma^{- 1} \cdot \func{r_{\G}}{\gamma}}
= \func{f}{\func{s_{\G}}{\gamma}}.
$$
As $ \Pair{\Co{\Unit{\G}}}{\Phi}_{u} \cong \SSet{u} \times \C $ for each $ u \in \Unit{\G} $, we can also define
$$
\forall k \in \C: \quad
\func{\lt_{\gamma}}{\func{s_{\G}}{\gamma},k} \df \Pair{\func{r_{\G}}{\gamma}}{k}.
$$
\end{itemize}
\end{enumerate}
By \cite[Proposition 4.1]{Brown}, $ \PGDS{\GG}{\lambda}{\Cc{\Unit{\G}}} $ is a proper groupoid dynamical system and $ \Fix{\GG}{\lambda}{\Cc{\Unit{\G}}} \cong \Co{\Orbit{\G}{\Unit{\G}}} $, so $ \Fix{\GG}{\lambda}{\Cc{\Unit{\G}}} $ can be made into a $ \Co{\Orbit{\G}{\Unit{\G}}} $-algebra.

To show that this is consistent with our framework, we will prove that
$$
\forall u \in \Unit{\G}: \quad
\Pair{\Fix{\GG}{\lambda}{\Cc{\Unit{\G}}}}{\Upsilon}_{\SqBr{u}_{\sim}} \cong \C.
$$
Fix $ u \in \Unit{\G} $, and let $ f,g,h \in \Cc{\Unit{\G}} \big|_{\SSet{\func{q}{u}}} = \Cc{\func{q}{u}} $. For all $ v \in \func{q}{u} $,
\begin{align*}
    \FUNC{\func{\Inner{f}{g}{\PGDS{\GG|_{\SSet{\func{q}{u}}}}{\lambda_{\SSet{\func{q}{u}}}}{\Cc{\func{q}{u}}}}^{D}}{h}}{v}
& = \Int{\G^{v}}{\func{\lt_{\gamma}}{\func{f \overline{g}}{\func{s_{\G}}{\gamma}}} \func{h}{\func{r_{\G}}{\gamma}}}{\lambda^{v}}{\gamma} \\
& = \Int{\G^{v}}{\func{f \overline{g}}{\func{s_{\G}}{\gamma}} \func{h}{\func{r_{\G}}{\gamma}}}{\lambda^{v}}{\gamma} \\
& = \SqBr{\Int{\G^{v}}{\func{f \overline{g}}{\func{s_{\G}}{\gamma}}}{\lambda^{v}}{\gamma}} \func{h}{v}.
\end{align*}
It suffices to prove that $ \D \Int{\G^{v}}{\func{f \overline{g}}{\func{s_{\G}}{\gamma}}}{\lambda^{v}}{\gamma} \in \C $ is a scalar independent of $ v $. Let $ v,w \in \func{q}{u} $, so that there exists an $ \eta \in \G_{v}^{w} $. Then
\begin{align*}
    \Int{\G^{v}}{\func{f \overline{g}}{\func{s_{\G}}{\gamma}}}{\lambda^{v}}{\gamma}
& = \Int{\G^{\func{r_{\G}}{\eta}}}{\func{f \overline{g}}{\func{s_{\G}}{\gamma}}}{\lambda^{\func{r_{\G}}{\eta}}}{\gamma} \\
& = \Int{\G^{\func{s_{\G}}{\eta}}}{\func{f \overline{g}}{\func{s_{\G}}{\eta \gamma}}}{\lambda^{\func{s_{\G}}{\eta}}}{\gamma} \quad
    \Br{\text{As $ \lambda $ is left-invariant.}} \\
& = \Int{\G^{\func{s_{\G}}{\eta}}}{\func{f \overline{g}}{\func{s_{\G}}{\gamma}}}{\lambda^{\func{s_{\G}}{\eta}}}{\gamma} \\
& = \Int{\G^{w}}{\func{f \overline{g}}{\func{s_{\G}}{\gamma}}}{\lambda^{w}}{\gamma}.
\end{align*}
Therefore, $ \Inner{f}{g}{\PGDS{\GG|_{\SSet{\func{q}{u}}}}{\lambda|_{\SSet{\func{q}{u}}}}{\Cc{\func{q}{u}}}}^{D} = C^{\func{q}{u}}_{f,g} \cdot 1_{\M{\Unit{\G}}} $ for some $ C^{\func{q}{u}}_{f,g} \in \C $, and with a little more work, it can be shown that $ \Map{\Orbit{\G}{\Unit{\G}}}{\C}{x}{C^{x}_{f,g}} \in \Co{\Orbit{\G}{\Unit{\G}}} $.
\end{Eg}



\begin{Eg}
As in \cref{Example 2}, let $ \G $ be a second-countable and locally compact Hausdorff groupoid that acts freely and properly on $ \Unit{\G} $, and let $ \lambda $ be a Haar system on $ \G $. Such groupoids arise in the study of groupoid equivalences (\cite{Brown/Goehle, Muhly/Raeburn/Williams}). Fix the following objects:
\begin{enumerate}
\item
$ X $ and $ q $ as in \cref{Example 2}.

\item
A separable groupoid dynamical system $ \GG = \Quad{\G}{A}{\Phi}{\alpha} $.

\item
If $ A_{0} \df \Span{\FUNC{\Im{\Phi}{\Cc{\Unit{\G}}}}{A}} $, then $ \PGDS{\GG}{\lambda}{A_{0}} $ is a proper groupoid dynamical system by \cite[Proposition~4.4]{Brown}.

\item
As $ \G $ is proper, $ \RCP{\GG|_{\SSet{x}}}{\lambda|_{\SSet{x}}} = \FCP{\GG|_{\SSet{x}}}{\lambda|_{\SSet{x}}} $ for all $ x \in X $ by \cite{Anantharaman-Delaroche/Renault}.
\end{enumerate}
This situation thus fits within our framework. By \cite[Proposition~3.6]{Brown/Goehle},
$$
\Fix{\GG}{\lambda}{A_{0}} =
\Set{f \in \Gammb{\A^{A,\Phi},p^{A,\Phi}}}
    {
    \begin{array}{c}
    \func{f}{\func{r_{\G}}{\gamma}} = \alphArg{\gamma}{\func{f}{\func{s_{\G}}{\gamma}}} ~ \text{and} \\
    \func{q}{u} \mapsto \Norm{f(u)} ~ \text{vanishes at} ~ \infty
    \end{array}
    }.
$$
Moreover, \cite[Proposition 3.6]{Brown/Goehle} shows that $ \Pair{\Fix{\GG}{\lambda}{A_{0}}}{\Upsilon} $ is a $ \Co{X} $-algebra whose fiber at $ x \in X $ is
\begin{align*}
      & ~ \Pair{\Fix{\GG}{\lambda}{A_{0}}}{\Upsilon}_{x} \\
=     & ~ \Set{f \in \Gammb{\Pair{\A^{A,\Phi}}{p^{A,\Phi}} \big|_{\InvIm{q}{\SSet{x}}}}}
              {
              \begin{array}{c}
              \func{f}{\func{r_{\G}}{\gamma}} = \alphArg{\gamma}{\func{f}{\func{s_{\G}}{\gamma}}} ~ \text{and} \\
              \func{q}{u} \mapsto \Norm{\func{f}{u}} ~ \text{vanishes at} ~ \infty
              \end{array}
              } \\
\cong & ~ \Fix{\GG|_{\SSet{x}}}{\lambda|_{\SSet{x}}}{A_{0}|_{\SSet{x}}},
\end{align*}
so our \cref{Main Theorem} recovers \cite[Proposition 3.4]{Brown/Goehle}. Note that
$$
\forall u \in \InvIm{q}{\SSet{x}}: \quad
\Fix{\GG|_{\SSet{x}}}{\lambda|_{\SSet{x}}}{A_{0}|_{\SSet{x}}} \cong \Pair{A}{\Phi}_{x}
$$
by \cite[Proposition 3.4]{Brown/Goehle}.
\end{Eg}


\section{Acknowledgements}
The main results of this paper were obtained while the first author was visiting the second author at the University of Colorado Boulder.  The first author would like to thank the second author for his hospitality during the visit.



\end{document}